\newtheorem{theorem}{Theorem}[section] 
\newtheorem{theoremAlph}{Theorem}
\newtheorem*{theorem*}{Theorem}
\newtheorem{lemma}[theorem]{Lemma}
\newtheorem*{lemma*}{Lemma}
\newtheorem*{corollary*}{Corollary}
\newtheorem*{conjecture}{Conjecture}
\newtheorem*{fact}{Fact}
\newtheorem{proposition}[theorem]{Proposition}
\newtheorem*{proposition*}{Proposition}
\newenvironment{question}[1][Question]{\par\noindent\textbf{#1:} \rmfamily}{\medskip}
\theoremstyle{definition}
\newtheorem{definition}{Definition}[section]
\theoremstyle{remark}
\title{Thick embeddings into the Heisenberg group and coarse wirings into groups with polynomial growth}
\author{Or Kalifa}
\date{September 2024}
\def\@settitle{\begin{center}%
  \baselineskip14\p@\relax
  \normalfont\LARGE\bfseries
  \@title
  \end{center}%
}
\begin{document}

\newcommand{\R}{\mathbb{R}}
\newcommand{\N}{\mathbb{N}}
\newcommand{\Z}{\mathbb{Z}}
\newcommand{\G}{\Gamma}
\newcommand{\ra}{\rightarrow}
\newcommand{\g}{\gamma}
\newcommand{\ps}{\psi}
\newcommand{\vp}{\varphi}
\newcommand{\ve}{\varepsilon}
\newcommand{\e}{\mathsf{e}}

\begin{abstract}
We bound the volume of thick embeddings of finite graphs into the Heisenberg group, as well as the volume of coarse wirings of finite graphs into groups with polynomial growth. This work follows the work of Kolmogorov-Brazdin, Gromov-Guth and Barret-Hume on thick embeddings of graphs (or complexes) into various spaces. We present here a conjecture of Itai Benjamini that suggest that the lower bound of the volume of thick embeddings of finite graphs into locally finite, non-planar, transitive graphs, obtained by the separation profile, is tight. Let $Y$ be a Cayley graph of a group with polynomial growth, we prove that any finite bounded-degree graph $G$ admits a coarse $C\log(1+|G|)$-wiring into $Y$ with the optimal volume suggested by the conjecture.
Additionally, for the concrete case where $Y$ is a Cayley graph of the 3 dimensional discrete Heisenberg group, we prove that any finite bounded-degree graph $G$ admits a $1$-thick embedding into $Y$, with optimal volume up to factor $\log^2(1+|G|)$.
\end{abstract}
\maketitle
\tableofcontents{}

\section{Introduction} \label{sec: Introduction}
Kolmogorov and Barzdin were initially interested in the realization of finite graphs as physical networks in space, drawing inspiration from real-world examples like neurons in the brain or logical networks in computers \cite{KB}. To model physical networks, they required that the image of edges will have some thickness, akin to real wires that possess a positive thickness. This thickness constraint also provides a framework for examining the geometric efficiency of embeddings in \( \mathbb{R}^3 \). Without thickness, a graph can be embedded in a ball of arbitrarily small size, but with thickness, a meaningful lower bound exists.

\begin{definition}(Thick Embedding)
Let \( G \) be a graph and \( Y \) be a metric space. A continuous function \( f: G \to Y \) is called a \textbf{\( T \)-thick embedding} if \( d_Y(f(a), f(b)) \geq T \) whenever \( a \) and \( b \) are: two distinct vertices; a vertex and an edge that does not contain it; or two non-adjacent edges. If \( Y \) admits a measure, we define the \textbf{volume} of \( f \), \( \text{vol}(f) \), to be the measure of the 1-neighborhood of \( f(G) \subseteq Y \).
\end{definition}

Since the function \( f \) from \( G \) to \( Y \) is continuous, \( G \) must have a topological realization. As introduced in \cite{BH}, the \textbf{topological realization} of a graph \( G = (V, E) \) is the topological space obtained from a disjoint union of unit intervals indexed by \( e \in E \), with endpoints labeled by the two elements contained in \( e \). All endpoints labeled by the same vertex \( v \in V \) are identified. We will refer to both the graph and its topological realization using the same letter.

Additionally, since we aim to embed objects inside graphs, we need to define a \textbf{measure} on a graph \( Y \). Viewing \( Y \) as its topological realization, we define the sigma algebra \( \Sigma \) as \( \{A \subseteq Y \mid \forall e \in EY: A \cap e \text{ is Lebesgue measurable in } e \cong [0,1]\} \), and the measure \( \mu \) on \( \Sigma \) by \( \mu(A) = \sum_{e \in E} \lambda(A \cap e) \), where \( \lambda \) is the Lebesgue measure.
In particular, if \( A \) is a union of edges, then \( \mu(1\text{-neighborhood of }A) \) is the number of edges that touch \( A \). Therefore, it is no greater than \( \sum_{v \in (VY) \cap A} \text{deg}(v) \). In our case $Y$ will be Cayley graph of a finitely generated group, therefore for every $A\subseteq Y$ which is a union of edges we will have $\mu(A)\leq C\cdot |(VY)\cap A|$ for some constant $C$ (the size of the generating set).

\smallskip

Kolmogorov and Barzdin provided bounds on the volume of thick embeddings of finite graphs into \( \mathbb{R}^3 \) \cite{KB}. Later, Gromov and Guth extended these bounds to finite simplicial complexes embedded in Euclidean spaces \cite{GG}. Barrett and Hume further bounded the volume of thick embeddings of finite graphs into symmetric spaces \cite{BH}.

\smallskip
The separation profile, initially introduced in \cite{BST}, was later defined in \cite{H} as follows:
Given a finite graph \(G\) with \(n\) vertices, the cut size of \(G\), \(\text{cut}(G)\), is the minimal cardinality of a set of vertices \(S\) such that any connected component of \(G \setminus S\) has at most \(n/2\) vertices. The \textbf{separation profile} \(\text{sep}_X\) of a graph \(X\) evaluated at \(n\) is the maximum of the cut sizes of all subgraphs of \(X\) with at most \(n\) vertices.

Itai Benjamini explored the embedding of finite graphs into infinite transitive graphs. Drawing inspiration from the separation profile concept, and observing that this profile provides a lower bound for the embedding volume, he proposed the following conjecture:

\begin{conjecture}
Let \( d\geq 3 \), and let \( Y \) be a locally finite, non-planar, transitive graph with separation profile \( \simeq n^\alpha \) for some \( \alpha \in (0,1] \). There exists a constant \( C \) such that for every graph \( G \) with \( n \) vertices and maximum degree \( \leq d \), there is a $1$-thick embedding of \( G \) into \( Y \) with volume \( \leq Cn^{1/\alpha} \). Maybe up to a factor of $\text{polylog}(n)$.
\end{conjecture}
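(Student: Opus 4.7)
The plan is to prove the conjecture via a recursive, scale-by-scale construction driven by the separation profile of $Y$ and a hierarchical decomposition of $G$. Since $\text{sep}_Y(n)\simeq n^\alpha$ and any subgraph of a thickly embedded graph inherits the separation profile of $Y$, the lower bound $\text{vol}(f)\gtrsim n^{1/\alpha}$ is essentially forced; the content of the conjecture is the matching upper bound.

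First, I would produce a hierarchical decomposition of $G$: using a balanced separator of size $\lesssim n^\alpha$, split $G$ into two pieces of roughly equal size and recurse, obtaining a dyadic tree of subgraphs whose leaves have bounded size. This yields, at each scale, a family of $\sim n/m$ subgraphs of size $\sim m$ joined by $\sim n^\alpha(n/m)^{1-\alpha}$ boundary edges crossing between them.

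Next, I would realize the tree of pieces inside $Y$ in a nested fashion: a piece of size $m$ is assigned a ball of $Y$ of volume $\sim m^{1/\alpha}$, with the balls hosting sibling subpieces disjoint inside the ball hosting their parent. In the polynomial-growth setting such nested balls exist and the purely geometric portion of the volume telescopes to $\lesssim n^{1/\alpha}$. The remaining task is the routing step: at every scale one must connect the boundary vertices across sibling balls by thick-disjoint bounded-length wires whose cumulative volume remains $\lesssim n^{1/\alpha}$ up to polylog factors.

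The main obstacle, and the reason the conjecture is subtle, lies in this routing step. For the weaker notion of coarse wiring, where bounded multiplicity at vertices of $Y$ is tolerated, the plentiful supply of short paths in a Cayley graph of polynomial growth makes routing feasible and delivers the $C\log(1+|G|)$-wiring bound. For a genuine $1$-thick embedding one additionally needs the wires to stay geometrically disjoint, which requires a \emph{transverse direction} in $Y$ along which to spread them. In the three-dimensional Heisenberg group the central (vertical) direction provides precisely such a transversal, at the price of a $\log^2(1+|G|)$ overhead; for an arbitrary non-planar transitive $Y$ no analogous universal gadget is visible from the separation hypothesis alone, which is the central difficulty in tackling the conjecture in full generality.
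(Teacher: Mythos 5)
There is a genuine gap --- in fact two. First, the statement you are addressing is an open conjecture (attributed to Benjamini); the paper does not prove it, and neither does your proposal: you yourself concede at the end that the routing step --- making the wires genuinely $1$-thick rather than boundedly overlapping --- is unresolved for a general non-planar transitive $Y$, and that is precisely the content of the conjecture. What you have written is a plausible strategy plus a summary of the weaker results the paper actually establishes (the coarse $C\log(1+n)$-wiring into polynomial-growth groups, proved there by a probabilistic argument with random vertex placements and geodesic roads rather than by any hierarchical decomposition, and the near-optimal thick embedding into the Heisenberg group via the central direction), not a proof of the statement.

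Second, your opening decomposition step is incorrect as stated. The hypothesis $\mathrm{sep}_Y(n)\simeq n^\alpha$ is a property of the \emph{target} $Y$: it bounds cut sizes of subgraphs of $Y$, and it is what forces the lower bound $\mathrm{vol}(f)\gtrsim n^{1/\alpha}$ (via expanders, as in Fact~\ref{fact}). It gives you no balanced separator of size $\lesssim n^\alpha$ for the \emph{source} graph $G$, which is an arbitrary bounded-degree graph and may itself be an expander, in which case every balanced separator of $G$ has size $\Theta(n)$ and your dyadic tree of small pieces with few crossing edges simply does not exist. So the hierarchical scheme collapses at the first step, independently of the routing difficulty. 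Any correct approach must embed graphs with no small separators (indeed these are exactly the extremal cases), which is why the paper works with all of $G$ at once inside a single ball and controls congestion probabilistically instead of divide-and-conquer.
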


The symbol \( \simeq \) is derived from the definition of \( \lesssim \) for functions from \( \mathbb{N} \to \N\cup \{\infty\} \)\footnote{We include $\infty$ here, as this allows us to apply the notation to the function $wir_{X \to Y}^k$, which was introduced in \cite{BH} and may return the value $\infty$.
}. Two such functions, \( f \) and \( g \), satisfy \( f \lesssim  g \) if there exists a constant \( C \) such that \( f(n) \leq Cg(Cn)+C\) for every \( n \in \mathbb{N} \), we write \( f \simeq g \) if both \( f \lesssim g \) and \( f \gtrsim g \).

The conjecture suggest that the lower bound given by the separation profile is tight. The following result illustrates this lower bound:

\begin{fact}\label{fact}
Let \( d\geq 3 \), and let \( Y \) be a graph with separation profile \( \simeq n^\alpha \) for $\alpha\in (0,1]$. There exists a constant \( c > 0 \) such that for every \( n \in \mathbb{N} \), there exists a graph with \( n \) vertices and maximum degree \( \leq d \), for which any $1$-thick embedding into \( Y \) has volume \( \geq cn^{1/\alpha} \).
\end{fact}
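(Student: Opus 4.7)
The plan is to exhibit, for each $n$, a bounded-degree graph $G = G_n$ with $n$ vertices whose intrinsic cut size is linear in $n$, and then transfer this lower bound through any thick embedding into $Y$. For the first step I take $G_n$ to be a $3$-regular expander on $n$ vertices (which exist for all sufficiently large $n$ by an explicit Ramanujan-type construction or the probabilistic method). Since $d \geq 3$, $G_n$ has maximum degree $\leq d$; and by the Cheeger-type inequality, $\text{cut}(G_n) \geq c_0 n$ for a constant $c_0 > 0$ independent of $n$.

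The heart of the argument is the following monotonicity statement: for any $1$-thick embedding $f \colon G \to Y$ with volume $V$,
\[ \text{cut}(G) \;\lesssim\; \text{sep}_Y(V). \]
To prove it, let $H \subseteq Y$ be the induced subgraph on the vertices of $Y$ within distance $1$ of $f(G)$. From the remarks following the definition of volume, $|V(H)| \lesssim V$, and since $\text{sep}_Y(m) \simeq m^\alpha$, one has $\text{sep}_Y(|V(H)|) \lesssim V^\alpha$. The thickness condition ensures that $f$ realises $G$ as a topological minor of $H$: assign to each $v \in V(G)$ a nearest vertex $\phi(v) \in V(Y)$ (injectivity of $\phi$ follows from the $1$-thickness of vertices), and discretise $f(e)$ into a path $P_e$ in $H$ between $\phi(u)$ and $\phi(v)$ for each edge $e = uv$. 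Again by thickness, the interiors of the $P_e$'s meet each vertex of $Y$ in at most $\deg_G(v) \leq d$ of them, and two $P_e$'s can share an interior vertex only if the corresponding edges of $G$ already share a vertex.

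Given a balanced separator $S \subseteq V(H)$ of size $\text{sep}_Y(|V(H)|)$, I would build a separator $S_G$ of $G$ by collecting all $v \in V(G)$ with $\phi(v) \in S$, together with, for each edge $e$ whose path $P_e$ meets $S$, one endpoint of $e$. The bounded degree of $Y$ together with the almost-disjointness of the $P_e$'s forces each vertex of $S$ to contribute $O(d)$ elements to $S_G$, so $|S_G| \lesssim V^\alpha$. To ensure that $S_G$ is actually a balanced separator of $G$, one must start not from a separator of $H$ balanced with respect to the uniform counting measure on $V(H)$ but from one balanced with respect to the weighted set $\phi(V(G))$; this weighted balance can be arranged by iterated bisection at a constant multiplicative cost, along the lines of \cite{H, BH, BST}.

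Combining the two ingredients yields $c_0 n \leq \text{cut}(G_n) \lesssim V^\alpha$, so $V \gtrsim n^{1/\alpha}$, which is the desired conclusion. The step I expect to require the most care is the final balance issue: a separator of $H$ of optimal size need not be balanced with respect to $\phi(V(G))$, since the vertex-images can be arbitrarily distributed among the components of $V(H)$. The resolution is to work with weighted separators throughout, an approach that is standard in the separation-profile literature but must be adapted carefully to the discretised topological-minor structure produced by the thick embedding.
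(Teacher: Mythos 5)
Your outline is correct, but it takes a genuinely different route from the paper. The paper's proof (Lemma~\ref{lemma: 1.17 imply fact}) is essentially a citation: it takes $X$ to be the union of all finite graphs of maximum degree $\leq d$, uses expanders to get $\mathrm{sep}_X \gtrsim n$, invokes \cite[Theorem 1.17]{BH} as a black box to conclude $\mathrm{wir}^k_{X\to Y} \gtrsim n^{1/\alpha}$, upgrades this asymptotic inequality to a pointwise bound $\geq c n^{1/\alpha}$ via Lemma~\ref{lemma: asymtotical notations grater than n^r}, and finishes by converting $1$-thick embeddings into coarse wirings. You instead re-derive the content of that citation, namely the transfer inequality $\mathrm{cut}(G) \lesssim \mathrm{sep}_Y(\mathrm{vol}(f))$, by discretising the embedding and pulling back weighted separators; this buys a self-contained argument (and your iterated-bisection resolution of the balance issue is the standard and correct one, the cost summing geometrically because $\mathrm{sep}_Y$ is polynomial), at the price of redoing the work of Barrett--Hume.

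Two of your intermediate claims are, however, false as stated and must be weakened to multiplicity bounds. First, injectivity of the nearest-vertex map $\phi$ does \emph{not} follow from $1$-thickness: two vertex images may lie at distance exactly $1$, for instance both at distance $\tfrac{1}{2}$ from a common vertex $w$ along two different edges, and then they share $w$ as nearest vertex; what is true is that at most $\Delta_Y$ (the maximal degree of $Y$) vertices of $G$ can share a nearest vertex. Second, for the same reason the discretised paths of non-adjacent edges \emph{can} share vertices of $H$; what is true, combining Vizing's theorem with the $1$-separation of images of non-adjacent edges, is that at most roughly $(d+1)\Delta_Y$ paths pass through any given vertex. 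Both weaker statements suffice for your construction of $S_G$ and only affect constants. Note also that your argument explicitly requires $Y$ to have bounded degree; this hypothesis is absent from the literal wording of the Fact but is implicit in the paper's context (and is likewise needed for \cite[Theorem 1.17]{BH}), so it is not a defect relative to the paper. Finally, to cover \emph{every} $n$ you should pad expanders (which exist for large even $n$) with a few extra vertices and absorb small $n$ into the constant $c$; this is routine.
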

This result follows directly from Theorem 1.17 in \cite{BH}, see lemma \ref{lemma: 1.17 imply fact} for details.

Thus, the conjecture proposes an upper bound that matches the known lower bound.

It is currently known that the conjecture holds for Cayley graphs of the groups \( \mathbb{Z}^d \) for \( d \geq 3 \in \mathbb{N} \). Indeed, the separation profile of \( \mathbb{Z}^d \) is \( \simeq n^{(d-1)/d} \) \cite[Corollary 3.3]{BST}\footnote{The separation profile does not depend on the choice of the generating set \cite[Page 4]{BST}.}. Moreover, it is known that there exists a constant \( C \) such that any bounded degree graph with \( n \) vertices can be embedded into \( \mathbb{R}^d \) via a $1$-thick embedding with volume at most \( Cn^{d/(d-1)} \) (Dimension 3 was proven by Kolmogorov and Barzdin in \cite[Page 195, Theorem 1]{KB}. Guth and Portnoy generalize it to all dimensions in \cite[Theorem 1.3]{BH}). This result can be adapted to give a $1$-thick embedding into a Cayley graph of \( \mathbb{Z}^d \); see the second appendix for details.

\smallskip 

Another important concept is the one of \textbf{coarse $\bm k$-wiring}, as introduced in \cite{BH}:
\begin{definition}
Let $\Gamma, \Gamma'$ be graphs. A \textit{\textbf{wiring}} of $\Gamma$ into $\Gamma'$ is a continuous map $f : \Gamma \to \Gamma'$ which maps vertices to vertices and edges to unions of edges. A wiring $f$ is a \textit{\textbf{coarse $k$-wiring}} if
\begin{enumerate}
    \item The restriction of $f$ to $V\Gamma$ is $\leq k$-to-$1$ i.e. $|\{ v \in V\Gamma | f(v) = w \}| \leq k$ for all $w \in V\Gamma'$.
    \item Each edge $e \in E\Gamma'$ is contained in at most $k$ of the image paths of $f$  i.e. $|\{ t \in E\Gamma | e\in f(t) \}| \leq k$ for each $e \in E\Gamma'$.
\end{enumerate}
\end{definition}
 The \textbf{\textit{volume} }of a wiring $vol(f)$ is the number of vertices in its image. The \textbf{diameter} of $f$ is the diameter of its image.

\smallskip
In this paper, we will prove two main theorems: Theorem \ref{thm:A} proves a weaker version of the conjecture for Cayley graphs of groups with polynomial growth:

\begin{theoremAlph}\label{thm:A}
Let \( d \in \mathbb{N} \) and let \( \Gamma \) be a group with polynomial growth of order \( \alpha > 1 \). Let \( S \subseteq \Gamma \) be a finite generating set. There exists a constant \( C = C(d, \Gamma, S) \) such that for any graph \( G \) with \( n \) vertices and maximum degree \( \leq d \), there is a coarse \( C\log(1+n) \)-wiring of $G$ into \( \text{Cay}(\Gamma, S) \) with diameter at most \( Cn^{\frac{1}{\alpha-1}} \) and volume at most \( Cn^{\frac{\alpha}{\alpha-1}} \).
\end{theoremAlph}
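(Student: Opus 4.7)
The strategy will be probabilistic: place the vertices of $G$ uniformly at random in a ball of $Y$ of the correct size, route each edge of $G$ using a random two-phase path, and establish the vertex-multiplicity and edge-congestion bounds by a Chernoff argument together with a union bound.

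First, I would set $R := C_1 n^{1/(\alpha-1)}$ for a suitable constant $C_1 = C_1(d,\Gamma,S)$ and let $B \subseteq Y$ be the ball of radius $R$ about the identity of $\Gamma$. Polynomial growth of order $\alpha$ gives $|B| \leq C_2 R^\alpha \leq C_3 n^{\alpha/(\alpha-1)}$, which is exactly the target volume, and $\mathrm{diam}(B) \leq 2R$, giving the diameter bound. I then define $\phi \colon V(G) \to V(B)$ by choosing each $\phi(v)$ independently and uniformly at random. The expected number of preimages of any fixed vertex of $B$ is $n/|B| = O(1)$, so a Chernoff bound followed by a union bound over the $|B|$ vertices of $B$ shows that, with positive probability, $\phi$ is $O(\log(1+n))$-to-one.

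Next, for each edge $\{u,v\} \in E(G)$ I would route $\phi(u)$ to $\phi(v)$ by a Valiant-style random path $P_{uv}$: pick $w \in V(B)$ uniformly at random, independently for each edge of $G$, and take the concatenation of a canonical geodesic from $\phi(u)$ to $w$ with one from $w$ to $\phi(v)$. Each segment has length at most $2R$, so the total wire length is $O(nR) = O(|B|)$, i.e.\ constant average congestion on the edges of $B$. To upgrade this to a maximum bound it suffices to prove a uniform per-edge load estimate of the form
\[
\Pr\!\bigl[f \in P_{uv}\bigr] \;\leq\; C_4/n
\]
for every edge $f$ of $Y$ inside $B$; conditionally on $\phi$, the indicators $\mathbf{1}[f \in P_{uv}]$ are independent across $uv \in E(G)$, so a Chernoff bound combined with a union bound over the $O(|B|)$ edges of $B$ yields congestion at most $C \log(1+n)$ at every edge with positive probability. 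The probabilistic method then produces a deterministic wiring satisfying all three conclusions of the theorem.

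The main obstacle will be the uniform per-edge load estimate. In $\mathbb{Z}^d$ one routes coordinate by coordinate in a random order and the estimate is elementary, but in a general polynomial-growth Cayley graph geodesics may concentrate on few edges and there is no natural coordinate system. I would deal with this by invoking Gromov's theorem to reduce to the virtually nilpotent case and then using the Pansu/Carnot--Carath\'eodory asymptotic structure: up to a large-scale bi-Lipschitz comparison, the rescaled ball of radius $R$ is modelled on a Carnot ball, on which a Mal'cev coordinate-based randomised routing gives the required bound. Transferring the estimate back to $Y$ through the bi-Lipschitz comparison closes the argument.
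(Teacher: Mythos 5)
Your overall strategy coincides with the paper's: place the vertices independently and uniformly in a ball of radius $\simeq n^{1/(\alpha-1)}$, whose growth bounds give the volume and diameter for free, and control vertex multiplicity and congestion by tail estimates plus a union bound over the ball. However, there are two genuine gaps in the routing step. First, your concentration argument mixes an unconditional estimate with conditional independence: conditionally on $\phi$ the indicators $\mathbf{1}[f\in P_{uv}]$ are indeed independent (only the intermediate points remain random), but then the per-edge probability is a conditional one, and it is \emph{not} uniformly $O(1/n)$ — every path out of $u$ starts at $\phi(u)$, so edges of $Y$ incident to images of vertices (especially images hit by many vertices of $G$) can have conditional hitting probability of constant order. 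Unconditionally the bound $O(1/n)$ does hold, but then paths of $G$-edges sharing an endpoint are not independent, so Chernoff does not apply directly. The paper resolves exactly this dependence problem combinatorially: by Vizing's theorem the edges of $G$ are split into $d+1$ matchings, and within each matching the roads are genuinely (unconditionally) independent, after which a binomial tail bound and a union bound over the $O(n^{\alpha/(\alpha-1)})$ points of the ball give load $\leq\log n$ with a loss of only the factor $d+1$.

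Second, the heavy machinery you invoke for the per-edge load estimate (Gromov's theorem, Pansu/Carnot--Carath\'eodory asymptotics, Mal'cev coordinates, a bi-Lipschitz transfer) is both unnecessary and, as sketched, not a proof: the asymptotic comparison with the Carnot group is not a bi-Lipschitz equivalence at the scale of single edges, so congestion bounds for a continuous model do not transfer to $Y$ edge by edge, and the reduction to the virtually nilpotent case changes the Cayley graph. The paper's estimate is elementary and needs nothing beyond the two-sided volume bounds $kR^{\alpha}\leq|B_R|\leq KR^{\alpha}$: it fixes once and for all a geodesic $\gamma_g$ from $1$ to each $g$, routes the edge $(u,v)$ along the translate $\phi(u)\gamma_{\phi(u)^{-1}\phi(v)}$ (no Valiant intermediate point is needed), and bounds $\Pr[p\in\mathsf{e}]$ by counting: there are at most $|B_{2r}|\leq K(2r)^{\alpha}$ choices of $h=\phi(u)^{-1}\phi(v)$, and for each such $h$ the starting point must lie in $p\gamma_h^{-1}$, a set of at most $2r+1$ points, giving $\Pr[p\in\mathsf{e}]\leq K(2r)^{\alpha}(2r)/(kr^{\alpha})^2\leq c/n$. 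The same counting would also repair your two-phase scheme, but the intermediate point buys nothing here. Finally, note the paper proves the bound pointwise (on vertices of $Y$) and phrases the output as a combinatorial wiring, converting it to a coarse wiring at the end; that step is routine and your version of it is fine.
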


The volume obtained here aligns with the prediction of the conjecture, as a group with polynomial growth of order \( \alpha \) has growth \( \simeq n^\alpha \) (see section \ref{subsection: Cayley graph}), and therefore a separation profile \( \simeq n^{\frac{\alpha - 1}{\alpha}} \) (see \cite[Theorem 7]{HMT}\footnote{From the claims in the first appendix, we infer that the different definition of $\simeq$ here and in \cite{HMT} does not affect the result - we use here the fact that the separation profile and the growth of a group are monotone increasing functions.}). However, this result only proves a weaker version of the conjecture, as we prove the existence of a coarse \(C\log(1+n) \)-wiring rather than a $1$-thick embedding. 

The second result, Theorem \ref{thm:B}, successfully establishes a 1-thick embedding with near-optimal volume for bounded-degree finite graphs into the 3-dimensional discrete Heisenberg group, denoted by \( \mathbb{H} \):

\begin{theoremAlph}\label{thm:B}
Let \( d \in \mathbb{N} \) and let \( S \subseteq \mathbb{H} \) be a finite generating set. There exists a constant \( C \) such that for every graph \( G \) with \( n \) vertices and maximum degree \( \leq d \), there is a 1-thick embedding of \( G \) into \( \text{Cay}(\mathbb{H}, S) \) with diameter \( \leq Cn^{\frac{1}{3}}\log(1+n) \) and volume \( \leq Cn^{\frac{4}{3}}\log^2(1+n) \).
\end{theoremAlph}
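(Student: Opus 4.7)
The plan is to apply Theorem~\ref{thm:A} and then upgrade the resulting coarse wiring to a $1$-thick embedding by exploiting the central $\Z$-fiber of $\mathbb{H}$. Write $x,y$ for horizontal generators and let $z := [x,y] = xyx^{-1}y^{-1}$ be the central commutator. The geometric input we shall use is that $z^j$ sits at Cayley distance $O(\sqrt{|j|})$ from the identity, so the $k$ central shifts $z, z^2, \dots, z^k$ all lie inside a ball of radius $O(\sqrt{k})$ of volume $O(k^2)$. Taking $k := C_1 \log(1+n)$ from Theorem~\ref{thm:A}, this provides $\log(1+n)$ distinct ``layers'' above every image vertex at essentially no geometric cost.

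Starting from the coarse $k$-wiring $\phi : G \to \mathrm{Cay}(\mathbb{H}, S)$ of Theorem~\ref{thm:A}, the construction proceeds in three steps.
\emph{(i) Vertex lifting:} for each $w$ in the image of $\phi$ with preimages $v_1,\dots,v_m$ in $G$ (so $m \leq k$), send $v_i$ to $w z^i$; this immediately separates all vertex collisions.
\emph{(ii) Edge rerouting:} for each edge $e' \in E(\mathrm{Cay}(\mathbb{H}, S))$ traversed by image paths $\phi(e_1),\dots,\phi(e_{m'})$ with $m' \leq k$, assign the $i$-th such path to traverse the translated edge $e' \cdot z^i$ in place of $e'$. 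Because the translated edges are genuinely distinct in $\mathrm{Cay}(\mathbb{H}, S)$ for distinct $i$, the bulk of each image path is now globally edge-disjoint.
\emph{(iii) Reconciliation ladders:} the layer assigned by (ii) may change as one walks along a single image path, and in general will not agree with the layer of the lifted vertex at either endpoint. At every such mismatch we splice in a short ``central ladder'' of length $O(\sqrt{k})$ that realizes the needed shift in $\langle z \rangle$ via a suitable commutator word.

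The main obstacle is to realize the ladders in step~(iii) so that they are pairwise edge-disjoint and also disjoint from the horizontal edges used in step~(ii). Every ladder meeting a given image vertex $w$ lies inside the tube $\{w z^j : 1 \leq j \leq k\}$ together with its $O(\sqrt{k})$-neighborhood, a region containing $O(k^2)$ edges. The bounded degree of $G$ combined with the multiplicity bound force at most $O(dk)$ ladders per tube, each of length $O(\sqrt{k})$, so the total edge demand in each tube is $O(k^{3/2})$, well below the $O(k^2)$ available. Since the Heisenberg relations provide many distinct commutator words implementing any given shift by $z^{\pm 1}$, a Hall-type matching (or a capacitated flow argument) performed inside each tube will yield the required edge-disjoint ladders, locally avoiding the horizontal edges already used by (ii). Summing the tube contribution $O(k^2) = O(\log^2(1+n))$ over the $O(n^{4/3})$ image vertices of $\phi$ yields the claimed volume bound $O(n^{4/3}\log^2(1+n))$, while the diameter of the new image exceeds that of $\phi$ only by an additive $O(\sqrt{k})$, staying well within $Cn^{1/3}\log(1+n)$. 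The most delicate task will be performing the flows simultaneously over all tubes, and in particular arguing that the few possible local interferences between a ladder and a shifted horizontal edge can always be avoided by switching to a different commutator word implementing the same central shift.
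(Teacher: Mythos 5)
Your layering idea is in the same spirit as the paper's (the paper also uses $z$-heights, chosen by a colouring, to separate roads), but your construction omits the step that actually makes room for the separation, and as written it has genuine gaps. First, step (ii) does not give global edge-disjointness: the translates $e'z^i$ are distinct for distinct $i$ \emph{over the same base edge} $e'$, but for two different image edges $e',e''$ one has $e'z^i = e''z^{i'}$ whenever $e''=e'z^{i-i'}$. The wiring produced by Theorem~\ref{thm:A} has total traffic $\Theta(n^{4/3})$ inside a ball $B_{2r}$ with only $\Theta(n^{4/3})$ edges, so its image typically contains many pairs of edges that are small central shifts of one another, and your per-base-edge layer assignment has no mechanism to avoid these cross-collisions. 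Second, the capacity count is a double count: the $O(\sqrt{k})$-neighbourhood of the tube over an image vertex does contain $O(k^2)$ edges, but that same region is (part of) the tube of the $\Theta(k^2)$ other image points lying within distance $O(\sqrt{k})$, so you cannot sum ``$O(k^2)$ per tube'' over all $O(n^{4/3})$ image vertices. Since you never rescale, the whole construction must live in the $O(\sqrt{k})$-neighbourhood of $\mathrm{Im}(\phi)$, which has only $O(n^{4/3})$ vertices; meanwhile layer mismatches can occur at a constant fraction of the steps of every road, so the reconciliation ladders alone may demand $\Theta(n^{4/3}\sqrt{\log n})$ edge-traversals, while any vertex can serve at most $\max(d,3)$ roads of pairwise adjacent edges. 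Demand exceeds supply, so no Hall/flow argument can succeed. Third, even if edge-disjointness were arranged, a $1$-thick embedding needs roads of non-adjacent edges to be \emph{vertex}-disjoint (and vertices isolated from roads); ``edge-disjoint ladders'' is the wrong target. A further warning sign: your claimed cost is only an additive $O(\sqrt{k})$ in diameter and no loss in volume, i.e.\ you would be proving the conjecture for $\mathbb{H}$ with no logarithmic factors, strictly more than Theorem~\ref{thm:B}.

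The paper avoids these problems by paying exactly the factors you try to avoid: starting from the combinatorial $C\log(1+n)$-wiring of Theorem~\ref{thm: Random combinatorial wiring} into $\mathrm{Cay}(\mathbb{H},\{x,y\})$, Proposition~\ref{propsition: Burden relief} first dilates the whole picture by $\lambda_K$ with $K=1000k$ (so neighbouring image points become $K$ apart, creating genuine room), then uses a colouring of vertices and of pairs $(p,e)$ with $p\in f_e$ (combining the load index with the residues of the coordinates mod $4$) to choose $z$-heights in $\{0,\dots,32k-1\}$ and explicit detour words $w_{s,\alpha,\alpha'}$; disjointness is then checked by congruence arguments, at the cost of a factor $k$ in diameter and $k^2$ in volume --- precisely the $\log(1+n)$ and $\log^2(1+n)$ in Theorem~\ref{thm:B}. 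Propositions~\ref{proposition: removing z} and~\ref{proposition: general generating set} then pass from $\{x,y,z\}$ to $\{x,y\}$ and to a general generating set $S$, and finally the combinatorial embedding is converted to a $1$-thick embedding. To repair your argument you would need, at a minimum, a rescaling (or some other mechanism producing slack capacity) before the central-shift bookkeeping, a layer assignment that is consistent across overlapping base edges, and disjointness in the vertex sense rather than the edge sense.
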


The obtained bound is almost as predicted by the conjecture. Since the growth of \( \mathbb{H} \) is \( \simeq n^4 \), the separation profile is \( \simeq n^{\frac{3}{4}} \), leading the conjecture to expect a volume bound of \( \leq Cn^{\frac{4}{3}} \). We achieve this result, up to factor of \( \log ^2 (1+n) \).

\subsection{Combinatorial Embedding}
So far, we have defined two types of embeddings: thick embedding and coarse $k$-wiring. Now, we introduce two additional types of embeddings, which are almost identical to the previous ones but are more convenient to work with in our settings.

\begin{definition}[Combinatorial Wiring]
Let $G$ and $Y$ be graphs. A \textbf{combinatorial wiring} of $G$ into $Y$ is a function 
$$f: VG \cup EG \to \bigcup_{n \in \N} (VY)^n$$ 
such that the image of each vertex is a vertex, and the image of an edge $e=\{u, v\} \in EG$, denoted by $f_e$, is a walk with endpoints $fu, fv$. We call this walk a \textbf{road}.\\
\end{definition}
Let $f: G \to Y$ be a combinatorial wiring. The \textbf{load} of a vertex $y \in VY$ is defined as
$$
\text{load}_f(y) := \max \left( |\{e \in EG \mid y \in f_e\}|, |\{v \in VG \mid fv = y\}| \right).
$$
The \textbf{load} of $f$ is 
$$
\text{load}(f) := \sup_{y \in VY} \text{load}_f(y).
$$
A combinatorial wiring with load $\leq k$ is called a \textbf{combinatorial $\bm k$-wiring}.\\
The \textbf{image} of $f$ is the set of all vertices in $Y$ that are in $f(V)$ or on any road of $f$.\\
The \textbf{volume} and \textbf{diameter} of $f$ are the cardinality and diameter of $\text{Im}f$.\\

\begin{definition}[combinatorial embedding]
A combinatorial wiring $f:G\ra Y$ is called a \textbf{combinatorial embedding} if:
\begin{enumerate}
    \item $f|_{VG}$ is injective.
    \item "\textit{Roads do not intersect}" — for any two non-adjacent edges $e, t \in EG$, we have $f_e \cap f_t = \emptyset.$
    \item "\textit{Vertices are isolated from roads}" — for any edge $e \in EG$ and any vertex $v \notin e$, we have $fv \notin f_e.$
\end{enumerate}
\end{definition}

\smallskip
It is straightforward to show that for graph $G$ and graph $Y$ with maximal degree $\leq d$ :
\begin{itemize}
    \item If there is a combinatorial embedding of $G$ into $Y$, then there is a $1$-thick embedding of $G$ into $Y$ such that the diameter increases\footnote{Recall that the diameter of a combinatorial wiring is defined as the diameter of set of vertices, while the diameter of a thick embedding also considers points that are not vertices. This explains the difference of 2.
} by at most $2$ and the volume increases\footnote{Remember that the volume of a thick embedding is the measure of the $1$-neighborhood of the image, which is bounded from above by the maximal degree of the graph times the number of vertices in the image.} by at most a factor of $d$.
    \item If there is a combinatorial $k$-wiring of $G$ into $Y$, then there is a coarse $k$-wiring of $G$ into $Y$ with same volume and the diameter increases by at most $2$.
\end{itemize}
\subsection{Proof Overview}
\subsubsection{Theorem \ref{thm:A}}
Theorem \ref{thm:A} will be proven using a probabilistic method. Specifically, this will be done in the proof of the following Theorem:

\begin{theorem}[Random combinatorial wiring into group with polynomial growth]
\label{thm: Random combinatorial wiring}
Let \( d \in \mathbb{N} \) and let \( \Gamma \) be a group with polynomial growth of order \( \alpha > 1 \). Let \( S \subseteq \Gamma \) be a finite generating set. There exists a constant \( C =C(d, \G, S)\) such that for any graph \( G \) with \( n \) vertices and maximum degree \( \leq d \), there is a combinatorial $C\log(1+n)$-wiring of \( G\) into \(Cay(\Gamma, S) \) with diameter at most $C n^{\frac{1}{\alpha-1}}$ and volume at most $Cn^\frac{\alpha}{\alpha-1}$. Moreover, any two neighboring vertices are mapped to different vertices.
\end{theorem}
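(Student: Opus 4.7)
The plan is to establish the theorem via a randomized construction: we place the vertices of $G$ uniformly at random into a suitable ball $B \subseteq \mathrm{Cay}(\Gamma,S)$, route each edge along a canonical left-invariant geodesic, and then control the load via concentration inequalities. Fix $R = \lceil C_0 n^{1/(\alpha-1)} \rceil$ for a constant $C_0 = C_0(\Gamma,S,d)$ chosen later, and set $B := B_R(\e)$ where $\e \in \Gamma$ is the identity; by polynomial growth of order $\alpha$, $|B| \asymp R^\alpha \asymp n^{\alpha/(\alpha-1)}$. Fix once and for all a \emph{left-invariant canonical geodesic scheme} $\gamma$, i.e.\ for each pair $(a,b) \in \Gamma \times \Gamma$ a geodesic $\gamma(a,b)$ in $\mathrm{Cay}(\Gamma,S)$ from $a$ to $b$ satisfying $\gamma(ga,gb) = g\cdot \gamma(a,b)$. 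The random wiring $f$ is then defined by choosing $f(v) \in B$ independently and uniformly for each $v \in VG$ and setting $f_e := \gamma(f(u),f(v))$ for each edge $e = \{u,v\} \in EG$. The diameter and volume bounds follow immediately from $\mathrm{Im}\,f \subseteq B_{2R}$, and the ``moreover'' clause follows from the union bound $\Pr[\exists\{u,v\} \in EG: f(u)=f(v)] \leq |EG|/|B| = o(1)$.

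The heart of the proof is showing the load is $\leq C\log(1+n)$ with positive probability. Fix $y \in B_{2R}$; its load decomposes as a vertex contribution $|\{v \in VG : f(v)=y\}|$ and an edge contribution $|\{e \in EG : y \in f_e\}|$. The first is Binomial$(n, 1/|B|)$ with mean $o(1)$ and exceeds $C\log n$ with probability at most $n^{-c}$ by Chernoff. For the second, apply Vizing's theorem to partition $EG$ into $D \leq 2d-1$ matchings $M_1, \ldots, M_D$; within each matching $M_i$ the indicators $\{\mathbf{1}_{y \in f_e}\}_{e \in M_i}$ are mutually independent, since distinct edges of $M_i$ have disjoint endpoint sets and hence independent $f$-values. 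Provided $\mathbb{E}[|\{e \in M_i : y \in f_e\}|] = O(1)$ for every $y$, Chernoff yields $|\{e \in M_i : y \in f_e\}| \leq C\log n$ except with probability $n^{-c}$; a union bound over the $n^{O(1)}$ candidates $y$ and $O(d)$ matchings then completes the argument.

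The main obstacle is therefore establishing $\Pr[y \in \gamma(f(u),f(v))] \leq C/n$ uniformly for all $y \in B_{2R}$. Left-invariance rewrites $y \in \gamma(f(u),f(v))$ as $f(u)^{-1}y \in \gamma(\e, f(u)^{-1}f(v))$, so after substituting $c = f(u)^{-1}f(v)$ and $z = f(u)^{-1}y$ the probability becomes $|B|^{-2}\sum_{a \in B}|\{c \in a^{-1}B : a^{-1}y \in \gamma(\e,c)\}|$. It suffices to show that for every $z \in \Gamma$ the set $\{c \in B_{2R} : z \in \gamma(\e,c)\}$ has cardinality $O(R)$: summing over $a \in B$ gives $\leq |B|\cdot O(R) = O(|B|R) = O(|B|^2/n)$ as desired. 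Averaged over $z \in B_{2R}$ this is automatic, since $\sum_z |\{c : z \in \gamma(\e,c)\}| = \sum_c d(\e,c) \leq 2R|B_{2R}|$; the genuine difficulty is uniformity in $z$. I would address this by exploiting the virtually-nilpotent structure of $\Gamma$ (Gromov's theorem): fix a Mal'cev basis of a finite-index nilpotent subgroup and define $\gamma$ by routing each element via its normal-form coordinates broken into coordinate-axis legs, extended equivariantly across cosets. The number of normal forms passing through any fixed $z$ is then bounded by a product of cross-section counts which, by the Bass--Guivarc'h growth formula for $\alpha$, totals $O(R)$; this is the step where the geometry of polynomial growth really enters the proof.
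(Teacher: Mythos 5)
Your overall architecture (uniform random placement in a ball of radius $R\asymp n^{1/(\alpha-1)}$, translation-invariant canonical geodesics, Vizing decomposition into matchings, Chernoff within each matching, union bound over $B_{2R}$, and the easy treatment of the vertex loads and of neighbouring vertices colliding) matches the paper's proof. The gap is in the one step you flag as the crux. The intermediate bound you propose --- that for \emph{every} $z\in\Gamma$ the set $\{c\in B_{2R} : z\in\gamma(1,c)\}$ has cardinality $O(R)$ --- is false, and not just for a bad choice of scheme: it fails for \emph{every} geodesic scheme. All the geodesics $\gamma(1,c)$, $c\in B_{2R}\setminus\{1\}$, leave the identity through one of its at most $|S\cup S^{-1}|$ neighbours, so by pigeonhole some neighbour $z_0$ of the identity lies on at least $(|B_{2R}|-1)/|S\cup S^{-1}| \asymp R^{\alpha}$ of them, which is $\gg R$ since $\alpha>1$. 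Consequently no Mal'cev-coordinate or normal-form routing can deliver the uniform-in-$z$ bound you are after, and the appeal to Gromov/Bass--Guivarc'h structure theory is aimed at proving something that cannot be true.

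The repair is to swap the order of the double count, which is exactly what the paper does and which removes any need for structure theory. Instead of fixing the starting point $a=f(u)$ and counting endpoints, fix the displacement $h:=f(u)^{-1}f(v)\in B_{2R}$ (at most $|B_{2R}|\leq K(2R)^{\alpha}$ choices). For fixed $h$, the condition $y\in f(u)\,\gamma_h$ forces $f(u)\in y\,\gamma_h^{-1}$, a set of size at most $|\gamma_h|\leq 2R+1$, so the number of admissible pairs $(f(u),h)$ is at most $|B_{2R}|\,(2R+1)$ and
\[
\mathbb{P}\bigl[\,y\in\gamma(f(u),f(v))\,\bigr]\;\leq\;\frac{|B_{2R}|\,(2R+1)}{|B_R|^{2}}\;\leq\;\frac{K(2R)^{\alpha}(2R+1)}{(kR^{\alpha})^{2}}\;=\;O\bigl(R^{1-\alpha}\bigr)\;=\;O(1/n),
\]
uniformly in $y$. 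The heavy traffic near the identity that broke your bound is harmless here, because those congested points correspond to only $O(R)$ starting positions once the displacement is fixed. With this substitution the rest of your argument (matchings, binomial tails at threshold $\asymp\log n$, union bound over the $O(n^{\alpha/(\alpha-1)})$ points of $B_{2R}$) goes through as you describe; just make sure the tail exponent beats $\alpha/(\alpha-1)$, which it does since the per-point mean is $O(1)$ and the threshold is logarithmic.
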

This theorem clearly implies theorem \ref{thm:A} since the existence of combinatorial $C\log(1+n)$-wiring implies the existence of coarse $C\log(1+n)$-wiring with same diameter and volume, up to a constant.

This theorem requires that the desired combinatorial wiring maps any two neighboring vertices to different vertices, a condition that is not needed for Theorem \ref{thm:A}. However, this property will be useful in the proof of Theorem \ref{thm:B}.

\subsubsection{Theorem \ref{thm:B}}
To prove Theorem \ref{thm:B}, we apply Theorem \ref{thm: Random combinatorial wiring} along with three additional propositions:

\begin{proposition}[Burden relief]
\label{propsition: Burden relief}
There is an absolute constant \( C \) such that:
For any finite graph \( G \) and a combinatorial \( k \)-wiring \( f: G \to  Cay(\mathbb{H},\{x,y\}) \)\footnote{The Cayley graph of \( \mathbb{H} \) with respect to the generating set \( \{x, y\} \) (see section \ref{sec: Heisenberg group})} where \( u \sim v \) implies \( fu \neq fv \), there exists a combinatorial embedding \( \tilde{f}: G \to Cay(\mathbb{H},\{x,y,z\}) \) that satisfies
$$ \text{diam}(\tilde{f}) \leq Ck \cdot \text{diam}(f) \quad \text{and} \quad \text{vol}(\tilde{f}) \leq Ck^2 \cdot \text{vol}(f). $$
\end{proposition}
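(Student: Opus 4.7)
The plan is to exploit the central generator $z$ to separate the $k$-fold load of $f$ into distinct parallel layers stacked in the $z$-direction, then use the extra room provided by Heisenberg dilations to carry out routing without collisions. At each $\mathbb{H}$-vertex $v$ in the image of $f$, the load consists of at most $k$ vertices $u \in VG$ with $fu=v$ and at most $k$ roads $f_e$ passing through $v$; I assign these at most $2k$ entities pairwise distinct $z$-levels in a bounded range of size $O(k)$, with the endpoint consistency $\tau_e(v)=\sigma(u)$ when $e$ is incident to $u$ at $v$. This is a purely local assignment, feasible because the total number of competing entities at $v$ is at most $2k$.

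Setting $\tilde f(u):=\delta_M(fu)\cdot z^{\sigma(u)}$ for the standard Heisenberg dilation $\delta_M$ with $M=Ck$, the dilation serves two purposes: it ensures global injectivity of $\tilde f|_{VG}$ (after dilation, two distinct image vertices lying in the same fiber of $\pi:\mathbb{H}\to\mathbb{Z}^2$ are separated in $z$-coordinate by at least $M^2\gg 2k$, so small $z$-shifts cannot accidentally identify them), and it expands each original image edge into a path of $M$ interior vertices, providing enough private real estate on each dilated edge to route its at most $k$ traversing roads. Each road $f_e=v_0,\ldots,v_L$ is lifted to a walk in $\text{Cay}(\mathbb{H},\{x,y,z\})$ that follows the dilated path at the prescribed $z$-levels. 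The main technical obstacle is carrying out the $z$-transitions without collision: a naive vertical $z$-shift at a dilated vertex passes through levels occupied by other roads resting there. The fix is to execute each transition inside a private side pocket: the road picks a unique switching site on the interior of a dilated edge, takes a short $xy$-detour of length $O(k)$ to an off-lattice point such as $\delta_M(v)x^ay^b$ with $a,b\geq 1$ (which lies on no scaled edge and no scaled vertex), performs the $z$-shift there, and returns. A similar side-pocket trick at each dilated vertex handles the transition between the levels a road uses on its two adjacent dilated edges, and reserving disjoint ``incoming'' and ``outgoing'' $z$-ranges at each interior vertex prevents the pre- and post-shift resting levels on a single edge from ever coinciding.

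The three combinatorial embedding axioms are then verified from the distinct-site and distinct-level bookkeeping, and the parameters follow by summing: since the Heisenberg dilation inflates Cayley distance in $\text{Cay}(\mathbb{H},\{x,y,z\})$ by factor $M$, the diameter is at most $M\cdot\text{diam}(f)+O(k)\le Ck\cdot\text{diam}(f)$; and each original image edge contributes $M+O(k)=O(k)$ vertices to each of its at most $k$ traversing roads, giving $\text{vol}(\tilde f)\le Ck^2\cdot\text{vol}(f)$. The combinatorial heart of the proof, and the main obstacle, is the side-pocket bookkeeping in the collision-avoidance step --- whose feasibility depends critically on the dilation factor $M=Ck$ providing enough horizontal room to give each road its own switching site and off-lattice detour.
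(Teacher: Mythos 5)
Your overall architecture is the same as the paper's: dilate by a factor $\Theta(k)$ using $\lambda_M(a,b,c)=(Ma,Mb,M^2c)$, lift each vertex and each road to a prescribed $z$-level in a range of size $O(k)$, and perform the level changes in short transverse ``side pockets'' on the interior of dilated edges. However, there is a genuine gap precisely at the step you yourself flag as the combinatorial heart: the \emph{purely local} level assignment (distinct levels only among the $\le 2k$ entities at a single image vertex $v$) is not enough. The dangerous configuration is two non-adjacent edges $e,t$ of $G$ whose roads both traverse the same edge $\{p,p'\}$ of $\mathrm{Cay}(\mathbb{H},\{x,y\})$ (this is exactly what a $k$-wiring permits), say in opposite directions. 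Along the dilated edge, the road of $e$ rests at the level assigned at $p$ on the portion before its switching site, and the road of $t$ rests at the level assigned at $p'$ on the portion before \emph{its} switching site (measured from the other end). These two portions overlap around the middle of the dilated edge for any reasonable placement of switching sites, so if the level of $(p,e)$ happens to equal the level of $(p',t)$ --- which your vertex-by-vertex assignment does nothing to forbid, since $p\neq p'$ --- the two roads collide. Reserving disjoint ``incoming/outgoing'' ranges does not repair this: in the opposite-direction scenario both colliding segments are ``outgoing'' segments, just based at different (adjacent) vertices.

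The paper closes exactly this hole by building the level (color) out of two pieces: a local multiplicity index $\eta\in\{0,\dots,k-1\}$ coming from the $k$-wiring property, \emph{and} a positional term $\alpha(p)$ depending on $(p_1,p_2)$ modulo $4$. Since two points of $\mathrm{Cay}(\mathbb{H},\{x,y\})$ at distance $\le 3$ with the same coordinates mod $4$ must coincide, this forces levels of non-adjacent roads to differ not only at a common point but at any two points within distance $3$ --- which is what the shared-edge (and adjacent-edge) analysis actually needs; it also separates vertex levels from interior-road levels. In addition, the switching site is tied to the incoming level (the pocket sits at position $500k+\alpha$ along the dilated edge, offset by one transverse step), which is what makes ``each road has a unique switching site'' verifiable rather than an assertion, and the non-intersection proof is then done by reading off residues of the coordinates mod $K$. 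Without a cross-vertex distinctness mechanism of this kind, and without an explicit rule making pockets of distinct roads provably disjoint from each other and from resting segments, the collision-avoidance step of your proposal does not go through as stated; the rest of your outline (injectivity of $\tilde f|_{V}$ via the $M^2$ separation of fibers, the diameter bound $M\cdot\mathrm{diam}(f)+O(k)$, and the volume bound via $\mathrm{vol}(f)\ge \frac{|V|+\sum_e |f_e|}{2k}$) matches the paper and is fine.
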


\begin{proposition}[Removing $z$]
\label{proposition: removing z}
There is an absolute constant $C$ such that:
If $f:G\ra Cay(\mathbb{H},\{x,y,z\})$ is a combinatorial embedding, then there is a combinatorial embedding $\tilde f:G\ra  Cay(\mathbb{H},\{x,y\})$ with $$diam(\tilde{f})\leq C\cdot diam(f)\quad\text{and}\quad vol(\tilde{f})\leq C\cdot vol(f).$$
\end{proposition}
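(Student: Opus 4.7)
My plan hinges on the identity $z = [x,y] = xyx^{-1}y^{-1}$ in $\mathbb{H}$, which says that any $z$-edge from $g$ to $gz$ in $\text{Cay}(\mathbb{H},\{x,y,z\})$ can be replaced by the length-$4$ commutator square $g \to gx \to gxy \to gxyx^{-1} \to gz$ in $\text{Cay}(\mathbb{H},\{x,y\})$. A naive edge-by-edge substitution along the roads of $f$ would only multiply volume and diameter by a constant factor, but it introduces three new intermediate vertices per $z$-edge, and these might collide with other road vertices or with images of $VG$, destroying the embedding property.

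To create room for the substitution, I would first pre-compose with a Heisenberg dilation. The map $\delta_N : x \mapsto x^N,\ y \mapsto y^N,\ z \mapsto z^{N^2}$ extends to an injective endomorphism of $\mathbb{H}$ because $[x^N,y^N] = z^{N^2}$. For a suitable absolute constant $N$ I would set $f' := \delta_N \circ f$: each vertex $fv$ becomes $\delta_N(fv)$, and each generator along every road is expanded according to $x^{\pm 1} \mapsto x^{\pm N}$, $y^{\pm 1} \mapsto y^{\pm N}$, $z^{\pm 1} \mapsto z^{\pm N^2}$. Then $f'$ is itself a combinatorial embedding into $\text{Cay}(\mathbb{H},\{x,y,z\})$; its vertex images sit on the sublattice $\delta_N(\mathbb{H})$, whose projection to the $(a,b)$-plane is $N\mathbb{Z}\times N\mathbb{Z}$. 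I would then define $\tilde f$ by replacing every single $z^{\pm 1}$-step of $f'$ with the corresponding length-$4$ commutator square.

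The volume and diameter estimates follow mechanically: every vertex or edge of $f$ contributes at most $O(N^2)$ vertices to $\tilde f$, so $\text{vol}(\tilde f) \leq C\cdot \text{vol}(f)$ and $\text{diam}(\tilde f) \leq C\cdot \text{diam}(f)$ for an absolute constant $C$ once $N$ is fixed.

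The main obstacle will be verifying that $\tilde f$ is still a combinatorial embedding. The structural fact I would exploit is that every vertex appearing in a road of $\tilde f$ has the form $\delta_N(g_i)\cdot u$, where $g_i$ is an original road vertex and $u$ is a small group element whose $(a,b)$-coordinates lie in $[-(N-1),N-1]^2$ (it arises either from a dilated $x$- or $y$-segment or from a commutator square). A putative collision between two such vertices on non-adjacent roads therefore yields an identity $\delta_N((g_i^{(1)})^{-1} g_j^{(2)}) = u_1 u_2^{-1}$ whose right-hand side has $(a,b)$-coordinates of absolute value less than $2N$; reducing modulo $N$ in the $(a,b)$-plane forces $(g_i^{(1)})^{-1}g_j^{(2)}$ to be a nonzero pure $z$-power, and disjointness of the original roads together with the forced signs of the corresponding dilated $z$-columns then places the two columns in disjoint $c$-intervals separated by at least $N^2$. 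The analogous argument rules out vertex-vs-road and vertex-vs-vertex collisions. Carrying out this coordinate case-check carefully, for $N$ chosen sufficiently large (a small explicit constant such as $N=5$ suffices), is the bulk of the work.
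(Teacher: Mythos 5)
Your construction is essentially the paper's: the paper dilates by $\lambda_2\colon(a,b,c)\mapsto(2a,2b,4c)$ and replaces each $z^{\pm1}$-edge of a road by the commutator word $(xyx^{-1}y^{-1})^{\pm 4}$ (your $\delta_N$ followed by commutator squares is the identical device, just with a larger $N$), and the diameter/volume bookkeeping is the same. The step of your outline that does not survive scrutiny is the funneling claim that a collision forces the base-vertex difference to be a nonzero pure $z$-power after reducing the $(a,b)$-coordinates mod $N$. A combinatorial embedding only guarantees that roads of non-adjacent edges are \emph{disjoint}; it gives no metric separation, so two such roads may pass through Cayley-adjacent vertices of $H_z$, e.g.\ $q=p\,x\,z^{j}$. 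Then $\delta_N(p^{-1}q)$ has $(a,b)$-coordinates $(N,0)$, which is entirely compatible with $u_1u_2^{-1}$ having $(a,b)$-coordinates of absolute value at most $2N-2$: the mod-$N$ reduction only pins the $(a,b)$-part of $p^{-1}q$ into $\{-1,0,1\}^2$, not to $(0,0)$.

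These mixed cases --- an interior point of a dilated $x$- or $y$-run of one road meeting a corner of a commutator square of the other, with adjacent base vertices --- are precisely the bulk of the paper's case analysis (its cases $s=x,u=z$; $s=y,u=z$; etc.), and they are where disjointness of the original roads is actually invoked: a point of the form $\lambda(p)x$ can lie on a commutator column over $\lambda(q)$ only at corners of the form $\lambda(q)z^{m}x$, and comparing third coordinates forces $m=0$ and then $p=q$, contradicting disjointness of $f_e$ and $f_t$. So your strategy is correct and repairable, but the deferred case check cannot be organized by the pure-$z$-power reduction; you must run the residue-class-plus-$c$-interval analysis on the adjacent-base-vertex cases as well, taking care of the Heisenberg cross term (the third coordinate of $\delta_N(q)u$ picks up $Nq_1$ whenever $u$ has nonzero $b$-offset, which is why the paper splits according to the parity of $\xi_2$).
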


\begin{proposition}[General Generating Set]
\label{proposition: general generating set}
For any finite generating set $S \subseteq \mathbb{H}$, there exists a constant $C=C(S)$ such that if $f:G \ra  Cay(\mathbb{H},\{x,y\})$ is a combinatorial embedding, then there is a combinatorial embedding $\tilde f:G\ra Cay(\mathbb{H},S)$ such that
$$diam(\tilde{f})\leq C\cdot diam(f)\quad\text{and}\quad vol(\tilde{f})\leq C\cdot vol(f).$$
\end{proposition}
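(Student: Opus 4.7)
The plan is to first spread out $f$ using a dilation of $\mathbb{H}$, and only then to replace each $\{x,y\}$-step by a short walk in $S$. Fix $R=R(S)$ so large that every element of $\{x^{\pm 1}, y^{\pm 1}\}$ has $S$-word length at most $R$ and every element of $S \cup S^{-1}$ has $\{x,y\}$-word length at most $R$; for each $s \in \{x^{\pm 1}, y^{\pm 1}\}$ also fix a representing $S$-word $w_s$ of length at most $R$. Let $\delta_L: \mathbb{H} \to \mathbb{H}$ denote the standard Heisenberg dilation, the injective endomorphism $(a,b,c) \mapsto (La, Lb, L^2 c)$ in the coordinates $x^a y^b z^c$ with $z=[x,y]$; it satisfies $\delta_L(x^{\pm 1}) = x^{\pm L}$, $\delta_L(y^{\pm 1}) = y^{\pm L}$, and via the standard word-length estimate $|(a,b,c)|_{\{x,y\}} \asymp |a|+|b|+\sqrt{|c|}$ it scales distances as $|\delta_L(g)|_{\{x,y\}} \asymp L |g|_{\{x,y\}}$. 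An integer $L = L(S)$ is to be fixed at the end.

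Define the stretched wiring $f^{(L)}: G \to Cay(\mathbb{H}, \{x,y\})$ by $f^{(L)} v := \delta_L(fv)$ and, for each edge $e$ with road $f_e = (g_0, \ldots, g_\ell)$ and step labels $s_{e,i} := g_i^{-1} g_{i+1} \in \{x^{\pm 1}, y^{\pm 1}\}$, by replacing each step $g_i \to g_{i+1}$ by the straight walk $\delta_L(g_i), \delta_L(g_i) s_{e,i}, \ldots, \delta_L(g_i) s_{e,i}^L = \delta_L(g_{i+1})$ of $\{x,y\}$-length $L$. The central structural claim to prove is that $f^{(L)}$ is a combinatorial embedding which is in addition $(c_1 L)$-\emph{separated}: for some absolute constant $c_1 > 0$, any two vertices $p, q$ belonging to non-adjacent roads of $f^{(L)}$ (or, respectively, to a road $f^{(L)}_e$ and a vertex image $f^{(L)} v$ with $v \notin e$) satisfy $d_{\{x,y\}}(p,q) \geq c_1 L$. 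Granted this, pick $L$ with $c_1 L > 2R^2$, so that the corresponding $S$-distance exceeds $2R$, and define $\tilde f: G \to Cay(\mathbb{H}, S)$ by $\tilde f v := f^{(L)} v$, substituting each $\{x,y\}$-step $\{h, hs\}$ on a road of $f^{(L)}$ by the $S$-walk spelled out by $w_s$ starting at $h$. Every vertex of $\tilde f_e$ lies within $S$-distance $R$ of $V(f^{(L)}_e)$, and since the $R$-neighborhoods of $V(f^{(L)}_e)$ and $V(f^{(L)}_t)$ are disjoint for non-adjacent $e,t$ (analogously for vertex-road separation), the result is a combinatorial embedding. The diameter and volume estimates are then routine: the image multiplies by at most a factor of $O(RL)$ and the $S$-diameter by at most $O(R^2 L)$, all constants depending only on $S$.

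The main obstacle is the $(c_1 L)$-separation property. For $p = \delta_L(g_i) s^j$ and $q = \delta_L(h_{i'}) (s')^{j'}$ with $s, s' \in \{x^{\pm 1}, y^{\pm 1}\}$ and $0 \leq j, j' \leq L$, one computes
$$ p^{-1} q \;=\; s^{-j}\, \delta_L(g_i^{-1} h_{i'})\, (s')^{j'} $$
explicitly in Heisenberg coordinates. The word-length formula then shows that $|p^{-1} q|_{\{x,y\}} < c_1 L$ forces the horizontal coordinates $(a, b)$ of $g_i^{-1} h_{i'}$ into a short finite list depending on $(s, s')$ and the rounding of $(j, j')$ to $\{0, L\}$, and pins down the vertical coordinate $c$ as well. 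A case analysis over $(s, s') \in \{x^{\pm 1}, y^{\pm 1}\}^2$ shows that in each of these finitely many configurations, the resulting relation between $g_i$ and $h_{i'}$ forces one of $g_i$ or $g_i s_{e,i}$ to coincide with a vertex of $V(f_t)$; for instance, with $s = x$, $s' = y$, and $g_i^{-1} h_{i'} = xy^{-1}$ (arising from the corner $j = j' = L$), the identity $g_i x = h_{i'} y$ forces $g_{i+1} = h_{i'+1}$, contradicting $V(f_e) \cap V(f_t) = \emptyset$. The vertex-road separation is handled identically, using the embedding property $fv \notin f_e$ for $v \notin e$ to rule out the analogous configurations.
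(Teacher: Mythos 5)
Your proposal is correct and follows essentially the same route as the paper: stretch the embedding by the Heisenberg dilation $\lambda_L$ (each $\{x,y\}$-step becoming $L$ equal steps in the same direction), prove an $\Omega(L)$ separation between non-adjacent stretched roads and between vertex images and non-incident roads, then substitute fixed $S$-words for $x$ and $y$ and conclude with the same neighborhood argument for disjointness, diameter and volume. The only real difference is how the separation is proved: you sketch a coordinate/word-length ($|a|+|b|+\sqrt{|c|}$) case analysis over the step labels, while the paper obtains separation exactly $L$ more cleanly by noting that each point of a stretched step lies on a length-$L$ geodesic between consecutive $\lambda_L$-images, that $Cay(\mathbb{H},\{x,y\})$ contains no triangles (so a suitable pair of the original endpoints is at distance $\geq 2$, hence at distance $\geq 2L$ after dilation by the expansion lemma), and then applying the triangle inequality.
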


So, for a finite generating set \( S \subseteq \mathbb{H} \), we apply Theorem \ref{thm: Random combinatorial wiring} to obtain a constant \( C \), such that for any bounded-degree graph with \( n \) vertices, there exists a combinatorial \( C \log(1+n) \)-wiring into \( Cay(\mathbb{H},\{x,y\}) \) with diameter \( \leq C n^{\frac{1}{3}} \) and volume \( \leq C n^{\frac{4}{3}} \). By sequentially applying these three propositions, we obtain a constant \( C' \) such that for any bounded-degree graph with \( n \) vertices, there is a combinatorial embedding into \( Cay(\mathbb{H},S) \) with diameter \( \leq C' n^{\frac{1}{3}} \log(1+n) \) and volume \( \leq C' n^{\frac{4}{3}} \log^2(1+n) \). This embedding can, of course, be converted into a 1-thick embedding with the same diameter and volume, up to a constant factor.

\smallskip

\begin{question}
The proof of Theorem \ref{thm:B}, which provide an embedding into the Heisenberg group, relies on a probabilistic method. Is it possible to construct a non-random embedding explicitly? Additionally, can the $\log^2 (1+n)$ factor be removed to achieve an embedding with optimal volume?
\end{question}

\bigskip
\subsubsection{Paper overview}
The next section introduces some definitions used throughout the paper. Section~\ref{sec: Random} proves Theorem~\ref{thm: Random combinatorial wiring}. Section~\ref{sec: Heisenberg group} introduces the Heisenberg group and some basic properties. Finally, Section~\ref{sec: Embedding into Heisenberg} proves the three propositions discussed above.

\section{Definitions} \label{sec: Definitions}
\subsection{Notations}
\begin{enumerate}[$\bullet$]
    \item By \textbf{graph}, we refer to an undirected graph without multiple edges or loops. If $G$ is a graph, then $VG$ is its vertex set, and $EG$ is its edge set.
    \item Let $(X,d)$ be a metric space and $A\subseteq X$ be a subset. We define $N_\ve(A)$ as the closed $\ve$-neighborhood of $A$, meaning
    \[
    N_\ve(A) := \{x \in X \mid d(x,A) \leq \ve\}.
    \]Let $x \in X$ and $r > 0$. We denote by $B(x, r)$ the open ball of radius $r$ centered at $x$. Occasionally, we add a subscript to $B$ to specify the metric space.
    \item If a point $p$ can be seen as a vector in $\Z^n$, we denote by $p_i$ its projection onto the $i$-th axis, for any $1 \leq i \leq n$.
\end{enumerate}

\subsection{Cayley Graph}\label{subsection: Cayley graph}
Let $\G$ be a finitely generated group and let $S \subseteq \G$ be a finite generating set. We define \({S^{\ast}}\) as the set of all finite words above the alphabet $S \cup S^{-1}$, including the empty word. An element $w = s_1 \dots s_n \in S^\ast$ is called an $S$-word, and its length is denoted by $|w|$.\\
A word $w = s_1 \dots s_n \in S^\ast$ has an evaluation, $\bar{w}$, which is the element $s_1 \dots s_n \in \G$. We often use the same notation for a word and its evaluation; the meaning should be clear from context.\\
The graph $\bm{Cay(\G, S)}$, called the \textbf{Cayley graph of $\bm{\G}$ with respect to $\bm{S}$}, is the graph where $\G$ is the vertex set, and there is an edge between $g \in \G$ and $h \in \G$ if and only if $g^{-1}h \in S \cup S^{-1}$.  An edge of this graph say to be \textbf{labeled} $s$ for $s\in S\cup S^{-1}$ if it is edge of the form $\{g,gs\}$\footnote{Notice that this edge is also labeled $s^{-1}$.}.\\
This graph gives rise to a metric and a word length on $\G$, denoted $d_S$ and $\lVert\cdot \rVert_S$, respectively.\\
For each $g \in \G$, the \textbf{word length} $\lVert g \rVert_S$ is the graph distance from $g$ to $1 \in \G$. It can also be defined as
\[
\lVert g \rVert_S := \min_{w \in S^\ast, \bar{w} = g} |w|.
\]
For $g, h \in \G$, their distance $d_S(g, h)$ is $\lVert g^{-1} h \rVert_S.$\\

A group $\G$ has \textbf{polynomial growth} if there is finite generating set $S\subseteq \G$ and an integer $d\in \N$ such that $\exists C\forall n: |\overline{B_{S}}(1, n)|\leq Cn^d$. The results of Bass, Guivarc’h, and Gromov show that if $\G$ has polynomial growth, there is an integer $d$ such that $$\exists c,C>0\forall n: cn^d\leq |\overline{B_{S}}(1, n)|\leq Cn^d$$ for any finite generating set $S \subseteq \G$ [\cite{Gro}, \cite{Bas}, \cite{Gui}]. This $d$ is called the \textbf{growth rate} or \textbf{growth order} of the group $\G$.

\subsection{Walk on a Cayley Graph}
Let $Y$ be a graph. A \textbf{walk} in $Y$ is a finite sequence of vertices $P = (v_1, \dots, v_n)$ such that $v_i$ is adjacent to $v_{i+1}$ (denoted by $v_i\sim v_{i+1}$).\\
Now, assume $Y = Cay(\G, S)$ for some group $\G$ and a finite generating set $S$. For $g \in \G$ and $w = s_1 \dots s_n \in S^\ast$, we denote by $(g; w)$ the walk $(g, gs_1,gs_1s_2, \dots, gs_1 \dots s_n)$, which is a walk in $Y$ from $g$ to $g\bar{w}$. 
If $w_1,w_2\in S^{\ast}$ are two words we can write $(g;w_1,w_2)$ instead of $(g;w_1w_2)$.
\section{Random wiring into groups with polynomial growth} \label{sec: Random}
We now present the proof of Theorem \ref{thm: Random combinatorial wiring}:
\begin{theorem*}
Let \( d \in \mathbb{N} \) and let \( \Gamma \) be a group with polynomial growth of order \( \alpha > 1 \). Let \( S \subseteq \Gamma \) be a finite generating set. There exists a constant \( C =C(d, \G, S)\) such that for any graph \( G \) with \( n \) vertices and maximum degree \( \leq d \), there is a combinatorial $C\log(1+n)$-wiring of \( G\) into \(Cay(\Gamma, S) \) with diameter at most $C n^{\frac{1}{\alpha-1}}$ and volume at most $Cn^\frac{\alpha}{\alpha-1}$. Moreover, any two neighboring vertices are mapped to different vertices.
\end{theorem*}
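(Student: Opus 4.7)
\smallskip
\noindent\textbf{Plan.} The construction is probabilistic. Set $r := C_0 n^{1/(\alpha-1)}$ for a sufficiently large constant $C_0 = C_0(\G, S, d)$, and write $B := \overline{B_S}(1, r)$. By Bass--Guivarc'h--Gromov, $|B| \asymp r^\alpha \asymp n^{\alpha/(\alpha-1)}$, and polynomial growth gives $|\overline{B_S}(1, 2r)| = O(|B|)$. Fix once and for all a canonical geodesic word $w(g) \in S^{\ast}$ for each $g \in \G$ (so $|w(g)| = \lVert g \rVert_S$). Now sample $f(v) \in B$ independently and uniformly for every $v \in VG$, and for each edge $e = \{u, v\} \in EG$ declare the road to be the geodesic $f_e := (f(u); w(f(u)^{-1} f(v)))$, of length at most $2r$. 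Deterministically, $\mathrm{Im}(f) \subseteq \overline{B_S}(1, 2r)$, giving diameter $\leq 4r$ and volume $\leq C n^{\alpha/(\alpha-1)}$. It then remains to show that with positive probability $f$ has no collision between adjacent vertices, vertex-load $\leq C\log(1+n)$, and edge-load $\leq C\log(1+n)$.

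The no-collision and vertex-load events are handled by direct first-moment and binomial-tail calculations. For each edge $\{u, v\} \in EG$, $\mathbb{P}[f(u) = f(v)] = 1/|B|$, so the expected number of collisions is $\leq dn/|B| = O(n^{-1/(\alpha-1)}) = o(1)$. For each $y \in \overline{B_S}(1, 2r)$, the vertex-load is a sum of $n$ independent Bernoulli$(1/|B|)$ variables, so
\[
\mathbb{P}\!\left[|\{v : f(v) = y\}| \geq k\right] \leq \binom{n}{k} |B|^{-k} \leq \left(\frac{en}{k|B|}\right)^{k},
\]
which for $k = C\log(1+n)$ with $C$ large enough is $o(n^{-\alpha/(\alpha-1)})$; a union bound over the $O(n^{\alpha/(\alpha-1)})$ points of $\overline{B_S}(1, 2r)$ then shows every vertex-load is $\leq C\log(1+n)$ with probability $1-o(1)$.

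The edge-load $L(e) := |\{t \in EG : e \in f_t\}|$ is more delicate: the indicators $\mathbf{1}[e \in f_t]$ over $t \in EG$ are dependent whenever two edges share a vertex. My plan is to decompose $EG$ into $\leq d+1$ matchings via Vizing's theorem; within each matching $M$ the endpoint labels $f(\cdot)$ are independent across edges, so $L_M(e) := |\{t \in M : e \in f_t\}|$ is a sum of independent indicators. Granting a uniform estimate
\[
\mathbb{P}[\,e \in f_t\,] \leq C_1 \, r/|B| \qquad \text{for every edge $e$ of } \mathrm{Cay}(\G, S),
\]
the expected load per matching is $\leq |M| \cdot C_1 r/|B| = O(n \cdot r^{1-\alpha}) = O(1)$, and a Chernoff bound yields $\mathbb{P}[L_M(e) > C\log(1+n)] \leq n^{-A}$ for any prescribed $A$. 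A union bound over $O(d)$ matchings and the $O(n^{\alpha/(\alpha-1)})$ edges in $\overline{B_S}(1, 2r)$ then controls every edge-load. Combining this with the previous two events, a union bound gives a positive-probability realization satisfying all the required conclusions.

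The main obstacle is the uniform per-edge estimate $\mathbb{P}[e \in f_t] \leq C_1 r/|B|$, equivalently the counting inequality $|\{(p, q) \in B \times B : e \in \gamma_{p,q}\}| \leq C_1 r |B|$, where $\gamma_{p, q} = (p; w(p^{-1}q))$. The averaged version is immediate, since $\sum_{e'} \mathbf{1}[e' \in \gamma_{p, q}] = |\gamma_{p,q}| \leq 2r$, but a uniform bound requires understanding how the canonical geodesics concentrate near individual edges. My plan is to select $w(\cdot)$ in a translation-equivariant way and, for $e = \{g, gs\}$, parameterize the pairs $(p, q)$ whose canonical geodesic crosses $e$ by the prefix datum $\bar{u} = p^{-1}g$ and the suffix datum $\bar{v} = (gs)^{-1}q$, each of length $\leq 2r$; polynomial growth of $\G$ and the constraint $p, q \in B$ bound the number of admissible $(\bar{u}, \bar{v})$ by $O(r \cdot |B|)$, yielding the bound. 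Should this rigid choice prove brittle, a backup is to randomize the geodesic used in each road, which by symmetry forces the per-edge probability to match its averaged value, at the cost of slightly more careful bookkeeping in the independence decomposition.
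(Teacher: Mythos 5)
Your proposal is correct and follows essentially the same route as the paper: i.i.d.\ uniform placement of the vertices in a ball of radius $r\asymp n^{1/(\alpha-1)}$, roads given by translates of fixed geodesics, a first-moment bound for adjacent-vertex collisions, a binomial tail plus union bound for the vertex load, and Vizing's theorem to split $EG$ into $\leq d+1$ matchings so that the road indicators become independent before applying a Chernoff-type tail and a union bound over the ball. The one step you single out as the ``main obstacle,'' the uniform estimate $\mathbb{P}[e\in f_t]\leq C_1 r/|B|$, is in fact immediate and needs no translation-equivariant or randomized choice of geodesics: writing $h=p^{-1}q\in \overline{B_S}(1,2r)$, for each fixed $h$ the starting point $p$ for which the translated geodesic $(p;w(h))$ traverses the edge $e$ (or passes through a given point) is determined by the prefix position, so there are at most $|w(h)|+1\leq 2r+1$ choices, giving $|\{(p,q)\in B\times B: e\in\gamma_{p,q}\}|\leq (2r+1)\,|\overline{B_S}(1,2r)|=O(r|B|)$ — exactly the counting the paper uses for its per-point bound $\mathbb{P}[p\in\mathsf{e}]\leq c/n$. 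Note also that the theorem's load is defined per vertex of $\mathrm{Cay}(\Gamma,S)$, so after excluding collisions you should pass from your per-edge load to the per-vertex road count using that every nontrivial road through a vertex uses one of its $\leq 2|S|$ incident edges, a constant-factor loss absorbed into $C$.
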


Denote \( Y := \text{Cay}(\Gamma, S) \), and for each \( R \in \mathbb{N} \), let \( B_R := \overline{B_Y}(1, R) \) denote the closed ball of radius \( R \) in \( Y \). Since the growth of \( \Gamma \) is of order \( \alpha \), there exist positive constants \( k \) and \( K  \) (depending only on \( \Gamma \) and \( S \)) such that for every \( R \in \mathbb{N} \):
\[
kR^\alpha \leq |B_R| \leq KR^\alpha.
\]

\subsubsection{Random combinatorial wiring}
Choose for every group element $g\in \G$ a geodesic $\g_{g}$ from $1$ to $g$ in the graph $Y$. For two group elements $g,h\in \G$ denote by $g\g_h$ the walk $(g;s_1,...,s_n)$ where $\g_h=(1;s_1,...,s_n)$.

For radius $r\in \N$ and directed graph $G$ with $n$ vertices, the associated random combinatorial wiring, $\mathsf f$, is created by selecting the positions of the vertices independently and uniformly at random inside the ball $B_r$. The positions of the roads are then determined by their endpoints and the predefined geodesics of the form \( \gamma_g \). Namely $\mathsf f$ is a function that receive $\vec g=(g_1,...,g_n) \in B_r^n$ and return the combinatorial wiring $\mathsf f(\vec g)$ which maps vertex $i$ to the group element $g_i$ and an edge $e=(i,j)$ to the road $g_i\g_{g_i^{-1}g_j}$ which is a walk from $g_i$ to $g_j$.

\subsubsection{proof of Theorem \ref{thm: Random combinatorial wiring}}
\begin{proof}
Let $G$ a finite graph with $n$ vertices and maximal degree $\leq d$. We want to find good combinatorial wiring of $G$ into $Y$, good in terms of small diameter, volume and load. Define the directed graph $\vec G=(V,E)$ to be a graph obtained from $G$ by choosing some directions to all of the edges. 

Set \( r := \lceil n^{\frac{1}{\alpha-1}} \rceil \) and let $\mathsf f$  be the random combinatorial wiring associated with $r$ and $\vec G$, which is a combinatorial wiring of $G$ into $Y$.

Through probabilistic arguments, we will show that there exists some \( \vec g\in B_r^n \) such that \( \mathsf{f}(\vec g) \) have small diameter, volume and load.

We will show that there exists \( n_0 = n_0(\Gamma, S, d) \in \mathbb{N} \) such that if \( n \geq n_0 \), the following inequalities hold:
\begin{align*}
\mathbb{P}[E_1] &:= \mathbb{P}[\exists p \in B_r : \text{$\mathsf{f}$ sends at least $\log n$ vertices to } p] \leq 0.1,\\
\mathbb{P}[E_2] &:= \mathbb{P}[\exists p \in B_{2r} : \text{at least } \log n \text{ roads of } \mathsf{f} \text{ pass through } p] \leq 0.1,\\
\mathbb{P}[E_3] &:= \mathbb{P}[\exists \{u, v\} \in E : \mathsf{f}u = \mathsf{f}v] \leq 0.1.
\end{align*}
After establishing these bounds, we will complete the proof as follows:

If $n<  n_0 $, select a combinatorial wiring that is injective on the vertices, ensuring that the images of neighboring vertices are distinct. Since there are only constant number of graphs with less than $n_0$ vertices, the load, diameter and volume of this wiring is bounded by some constant.

If \( n \geq n_0 \), there is a positive probability that none of the events \( E_1 \), \( E_2 \), or \( E_3 \) occur. Therefore, there exists some \( \vec g\in B_r^n \) such that the combinatorial wiring \( f := \mathsf{f}(\vec g) \) is a wiring where none of these events happen. This \( f \) is the desired combinatorial wiring, as:
\begin{itemize}
    \item The load is less than \( \log n \) since events \( E_1 \) and \( E_2 \) do not occur.
    \item The image of the wiring lies entirely within \( B_{2r} \), so the diameter is \( \leq 4r \leq 8n^{\frac{1}{\alpha-1}} \).
    \item The volume is at most \( |B_{2r}| \leq K(2r)^\alpha \leq 4^\alpha K n^{\frac{\alpha}{\alpha-1}} \).
    \item For every edge \( \{u, v\} \in E \), we have \( fu \neq fv \) since \( E_3 \) does not occur.
\end{itemize}

To prove the existence of such $n_0$ we will prove below that $\mathbb{P}[E_i]\stackrel{n\ra \infty}{\longrightarrow}0$ for $i=1,2,3$.
\end{proof}

\begin{lemma}
$\mathbb{P}[E_1]\stackrel{n\ra \infty}{\longrightarrow}0$.
\end{lemma}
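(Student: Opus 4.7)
The plan is to view the vertex placement as a balls-into-bins experiment: we are throwing $n$ independent uniform balls into the $|B_r|$ bins of $B_r$, and $E_1$ is the event that some bin receives at least $\log n$ balls. The key quantitative input is the polynomial growth bound $|B_r|\geq k r^\alpha \geq k n^{\alpha/(\alpha-1)}$, which shows that there are vastly more bins than balls, so the expected occupation of each bin tends to $0$.

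First, fix any $p\in B_r$. The number $X_p$ of vertices sent to $p$ is $\mathrm{Bin}(n,1/|B_r|)$, and I would denote its mean by
\[
\mu \;:=\; \frac{n}{|B_r|} \;\leq\; \frac{n}{k r^\alpha} \;\leq\; \frac{1}{k}\, n^{\,1-\alpha/(\alpha-1)} \;=\; \frac{1}{k}\, n^{-1/(\alpha-1)}.
\]
Since $\alpha>1$, this satisfies $\mu \to 0$ as $n\to\infty$, and in particular $\mu \leq n^{-c}$ for some fixed $c=c(\alpha)>0$ once $n$ is large enough.

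Next, for the tail I would use the elementary union-bound estimate
\[
\mathbb{P}[X_p \geq \log n] \;\leq\; \binom{n}{\lceil \log n\rceil}\Bigl(\tfrac{1}{|B_r|}\Bigr)^{\lceil \log n\rceil} \;\leq\; \frac{\mu^{\lceil \log n\rceil}}{\lceil \log n\rceil !}.
\]
Then a union bound over the at most $|B_r| \leq K r^\alpha \leq K'\, n^{\alpha/(\alpha-1)}$ points of $B_r$ gives
\[
\mathbb{P}[E_1] \;\leq\; |B_r|\cdot\frac{\mu^{\lceil \log n\rceil}}{\lceil \log n\rceil !} \;\leq\; K'\, n^{\alpha/(\alpha-1)}\cdot \mu^{\log n}.
\]
Using $\mu\leq n^{-c}$ yields $\mu^{\log n}\leq n^{-c\log n}=e^{-c(\log n)^2}$, which decays faster than any polynomial in $n$, so the right-hand side tends to $0$. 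This yields $\mathbb{P}[E_1]\to 0$ as required.

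There is no real obstacle here: the proof is a routine second-moment / binomial tail computation, and the only thing one must check carefully is that the polynomial factor $|B_r|\leq K r^\alpha$ coming from the union bound is dominated by the super-polynomial decay $\mu^{\log n}$. The choice of $r\simeq n^{1/(\alpha-1)}$ is exactly what makes $\mu$ a negative power of $n$, which is the content driving the argument; any smaller radius would make $\mu$ of order at least a constant and break the estimate.
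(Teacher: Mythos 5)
Your proposal is correct and follows essentially the same route as the paper: a union bound over the points of $B_r$ combined with the binomial estimate $\binom{n}{m}|B_r|^{-m}\leq \frac{n^m}{m!\,|B_r|^m}$ at the threshold $m\approx\log n$. The only (cosmetic) difference is in how the decay is extracted: you discard the factorial and use the full growth bound $|B_r|\geq k n^{\alpha/(\alpha-1)}$ to get the super-polynomial factor $\mu^{\log n}\leq e^{-c(\log n)^2}$, whereas the paper keeps the factorial $\lfloor\log n\rfloor!$ and only needs $|B_r|\gtrsim n$; both conclude the same limit.
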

\begin{proof}
We aim to define a random variable \( \mathsf{Q} \) that indicates the maximum number of vertices in \( V \) that \( \mathsf{f} \) sends to the same point. For each \( p \in B_r \), define the random variable $\mathsf Q_p:= |\{v \in V \mid \mathsf{f}v = p\}|$. In other words, \( \mathsf{Q}_p \) counts the number of vertices that \( \mathsf{f} \) maps to \( p \). Now define $\mathsf{Q} := \max_{p \in B_r} \mathsf{Q}_p$.

For any \( m \in \mathbb{N} \):
\[
\mathbb{P}[\mathsf{Q} \geq m] = \mathbb{P}\left[\bigcup_{p \in B_r} (\mathsf{Q}_p \geq m)\right] \leq \sum_{p \in B_r} \mathbb{P}[\mathsf{Q}_p \geq m] \leq |B_r| \binom{n}{m} \frac{1}{|B_r|^m} \stackrel{{n \choose m} \leq \frac{n^m}{m!}}{\leq} \frac{n^m}{|B_r|^{m-1} m!}.
\]

Since \( |B_r| \geq k n^{\frac{\alpha}{\alpha - 1}} \geq kn \), we get:
\[
\mathbb{P}[\mathsf{Q} \geq m] \leq \frac{n}{k^{m-1} m!}.
\]

In total, for $n\geq 3$ we have:
\[
\mathbb{P}[E_1] = \mathbb{P}[\mathsf{Q} \geq \log n] \leq \mathbb{P}[\mathsf{Q} \geq \lfloor \log n \rfloor] \leq \frac{n}{k^{\lfloor \log n \rfloor - 1} \lfloor \log n \rfloor!}\stackrel{n\ra \infty}{\longrightarrow}0.
\]
\end{proof}

For an edge $e=(u,v)\in E$ define the random road $\mathsf e$ as $\mathsf e:=\mathsf f_e$, which is equivalent to just choosing two group elements $g,h$ independently and uniformly at random from $B_r$ and return the walk $g\g_{g^{-1}h}$. 

\begin{lemma}
$\mathbb{P}[E_2]\stackrel{n\ra \infty}{\longrightarrow}0$.
\end{lemma}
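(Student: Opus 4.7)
The plan is to parallel the proof of the previous lemma, with one extra step to handle the fact that the events $\{p\in \mathsf{e}\}$ for different edges $e$ are not independent when the edges share a vertex. First I will establish a uniform pointwise estimate $\mathbb{P}[p\in \mathsf{e}]\leq C/n$ for every $p\in B_{2r}$ and every edge $e$, where $C=C(\Gamma,S)$. Writing $\mathbb{P}[p\in \mathsf{e}]=N(p)/|B_r|^2$ with $N(p)=|\{(g,h)\in B_r^2:g^{-1}p\in\gamma_{g^{-1}h}\}|$, I substitute $z=g^{-1}\in B_r$ and $x=g^{-1}h\in zB_r\subseteq B_{2r}$, and swap the order of summation to obtain
\[
N(p)\;\leq\;\sum_{x\in B_{2r}}|\{z\in B_r:zp\in\gamma_x\}|\;\leq\;\sum_{x\in B_{2r}}|\gamma_x|\;\leq\;|B_{2r}|(2r+1)\;=\;O(r^{\alpha+1}).
\]
Dividing by $|B_r|^2\geq k^2 r^{2\alpha}$ and using $r=\lceil n^{1/(\alpha-1)}\rceil$ (so that $r^{\alpha-1}\geq n$) gives the claimed $O(1/n)$ bound.

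Next, I convert this into a tail bound on $X_p:=|\{e\in EG:p\in \mathsf{e}\}|$. The naive inequality $\mathbb{P}[X_p\geq m]\leq\binom{|E|}{m}(C/n)^m$ fails because distinct edges are correlated when they share endpoints. The fix is to notice that if $X_p\geq m$, then the $m$ offending edges form a subgraph of $G$ with maximum degree $\leq d$ and hence, by a greedy argument, contain a matching of size at least $m':=\lceil m/(2d-1)\rceil$. Edges in a matching are vertex-disjoint, so the corresponding events are genuinely independent, and a union bound over matchings of size $m'$ in $G$ gives
\[
\mathbb{P}[X_p\geq m]\;\leq\;\binom{|E|}{m'}\Big(\frac{C}{n}\Big)^{m'}\;\leq\;\Big(\frac{eCd}{m'}\Big)^{m'},
\]
using $|E|\leq dn/2$. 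For $m=\log n$ this is $\bigl(O(d^2/\log n)\bigr)^{\Theta(\log n/d)}$, which decays faster than any polynomial in $n$.

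Finally, a union bound over $p\in B_{2r}$, a set of polynomial size $|B_{2r}|=O(n^{\alpha/(\alpha-1)})$, completes the proof, since the super-polynomial decay from the previous step absorbs this polynomial factor. The main difficulty lies in the first step: a priori, particular choices of the geodesics $\gamma_g$ could cause certain points $p$ to sit on disproportionately many chosen geodesics, ruining the uniform bound. The substitution $z=g^{-1}$ rescues us by recasting the local count at $p$ as an averaged global count over all geodesics $\gamma_x$ with $x\in B_{2r}$, at which point the crude estimate $|\gamma_x|\leq 2r+1$ is enough to close the argument.
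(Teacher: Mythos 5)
Your proof is correct, and it follows the paper's overall skeleton (pointwise bound $\mathbb{P}[p\in\mathsf{e}]\lesssim 1/n$, reduction of the dependence problem to vertex-disjoint edges, then a union bound over the polynomially many points of $B_{2r}$), but it handles the dependence between roads by a genuinely different device. The paper fixes a global structure in advance: by Vizing's theorem the edges of $G$ are partitioned into $d+1$ matchings $E_1,\dots,E_{d+1}$, within each of which all roads are independent, and then bounds $\mathbb{P}[\mathsf{load}^i_p\ge \lfloor\log n/(d+1)\rfloor]\le\binom{dn}{m}(c/n)^m$ color by color, noting that $\mathsf{load}_p\ge\log n$ forces $\mathsf{load}^i_p\ge\lfloor\log n/(d+1)\rfloor$ for some $i$. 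You instead work locally: if at least $m$ roads pass through $p$, the offending edges span a subgraph of maximum degree $\le d$ and hence contain (greedily) a matching of size $m'\ge m/(2d-1)$, whose roads are independent because they involve disjoint sets of the i.i.d.\ vertex positions; a union bound over all $m'$-subsets of $E$ then gives $\binom{|E|}{m'}(C/n)^{m'}$, which is super-polynomially small for $m=\log n$ and absorbs the $|B_{2r}|=O(n^{\alpha/(\alpha-1)})$ factor. The two reductions are interchangeable here: yours is more elementary (no edge-coloring theorem needed) and self-contained, at the cost of a slightly worse constant in the exponent ($m/(2d-1)$ versus $m/(d+1)$), which is immaterial to the conclusion; the paper's coloring trick has the mild aesthetic advantage that the independent families are fixed once and for all rather than extracted from the random event. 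Your first step (the substitution $z=g^{-1}$, $x=g^{-1}h$ and the count $\le|B_{2r}|(2r+1)$ choices) is the same computation as the paper's numerator/denominator estimate, just phrased as a change of variables.
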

\begin{proof}
For a point \( p \in B_{2r} \) and an edge \( e = (u, v) \in E \), the probability that the rnadom road \( \mathsf e \) contains \( p \) is given by:
\[
\mathbb{P}[p \in \mathsf e] = \frac{|\{(g, g') \in B_r^2 \mid p \in g \gamma_{g^{-1} g'}\}|}{|\{(g, g') \in B_r^2 \}|}
\leq \frac{\sum_{h \in B_{2r}} |\{g \in B_r \mid p \in g \gamma_{h}\}|}{|\{(g, g') \in B_r^2 \}|}
\leq \frac{K(2r)^\alpha (2r)}{(kr^\alpha)^2}.
\]

This last inequality holds because:\\
Numerator: There are at most \( |B_{2r}| \leq K(2r)^\alpha \) ways to choose \( h \). For each choice of \( \mathfrak{h} \), we must choose \( g \) such that \( p \in g \gamma_{h} \), or equivalently, \( g \in p \gamma_{h}^{-1} \). Since \(|p\gamma_{h}^{-1}|= |\gamma_{h}| \leq 2r \), there are at most \( 2r \) choices for \( g \).\\
Denominator: We have $|B_r^2|\geq (kr^\alpha)^2$.

Thus, we have:
\[
\mathbb{P}[p \in \mathsf{e}] \leq \frac{2^{\alpha+1}K r^{\alpha+1}}{k^2 r^{2\alpha}} = c \frac{1}{r^{\alpha-1}} \leq \frac{c}{n},
\]
where \( c := \frac{2^{\alpha+1}K}{k^2} \).

This handles the case of a single edge. Now, we consider all edges simultaneously. Ideally, the random roads would be independent, but since some edges share common vertices they are not independent. We address this issue using the low degree of \( G \) and a coloring trick. By \textit{Vizing's theorem}, the edges of \( G \) can be colored with \( d+1 \) colors such that each subgraph \( G_i = (V, E_i) \), which induced by all the edges of color $i$, is a matching graph (a graph with maximum degree 1).

Fix \( i \in \{1, \dots, d+1\} \). Since \( G_i \) is a matching graph, every pair of edges in \( G_i \) are non-adjacent, and thus their corresponding random roads are independent.

For \( p \in B_{2r} \), define the random variable \( \mathsf{load}_p \) as \( |\{e \in E \mid p \in \mathsf{e}\}| \), which counts the number of roads passing through \( p \). Similarly, define \( \mathsf{load}_p^i := |\{e \in E_i \mid p \in \mathsf{e}\}| \) to count the number of \( i \)-colored roads passing through \( p \). Set \( m := \lfloor \frac{\log n}{d+1} \rfloor \), and we obtain:
\[
n^{\frac{\alpha}{\alpha-1}} \cdot \mathbb{P}[\mathsf{load}_p^i \geq m] \leq n^{\frac{\alpha}{\alpha-1}} \binom{dn}{m} \left(\frac{c}{n}\right)^m \leq \frac{n^{\frac{\alpha}{\alpha-1}} (dc)^m}{m!} \stackrel{n \to \infty}{\longrightarrow} 0.
\]

The first inequality uses the fact that the random roads \( \{\mathsf{e} \mid e \in E_i\} \) are independent, and there are no more than \( dn \) edges in \( E_i \).

This holds for a specific point \( p \in B_{2r} \), but we need to bound the probability for all points. The number of points is:
\[
|B_{2r}| \leq K (2r)^\alpha \leq 4^\alpha K n^{\frac{\alpha}{\alpha-1}}.
\]

Thus, we have:
\[
\mathbb{P}[\exists p \in B_{2r} : \mathsf{load}_p^i \geq m] = \mathbb{P}\left[\bigcup_{p \in B_{2r}} \mathsf{load}_p^i \geq m\right] \leq 4^\alpha K n^{\frac{\alpha}{\alpha-1}} \cdot \max_{p \in B_{2r}} \mathbb{P}[\mathsf{load}_p^i \geq m] \stackrel{n \to \infty}{\longrightarrow} 0.
\]

Finally, to bound the total load for all edges in \( G \), note that if a point \( p \in B_{2r} \) is contained in at least \( A \) roads, then there must be some \( i \) such that \( p \) is contained in at least \( \lfloor A/(d+1) \rfloor \) $i$-colored roads. Therefore:

\[
\mathbb{P}[E_2] = \mathbb{P}[\exists p \in B_{2r} : \mathsf{load}_p \geq \log n] \leq \mathbb{P}\left[\exists p \in B_{2r}, i \in \{1, \dots, d+1\} : \mathsf{load}_p^i \geq m\right]
\]
\[
= \mathbb{P}\left[\bigcup_{i \in \{1, \dots, d+1\}} \exists p \in B_{2r} : \mathsf{load}_p^i \geq m\right] \leq (d+1) \max_{i \in \{1, \dots, d+1\}} \mathbb{P}[\exists p \in B_{2r} : \mathsf{load}_p^i \geq m] \stackrel{n \to \infty}{\longrightarrow} 0.
\]
\end{proof}

\begin{lemma}
$\mathbb{P}[E_3]\stackrel{n\ra \infty}{\longrightarrow}0$.
\end{lemma}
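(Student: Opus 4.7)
The plan is a straightforward first-moment/union-bound calculation, considerably simpler than the arguments used for $E_1$ and $E_2$. First, I would fix a single directed edge $e = (u,v) \in E$ and note that, by construction of $\mathsf{f}$, the images $\mathsf{f}u$ and $\mathsf{f}v$ are independent and uniformly distributed on $B_r$. Hence
$$\mathbb{P}[\mathsf{f}u = \mathsf{f}v] \;=\; \sum_{p \in B_r}\mathbb{P}[\mathsf{f}u = p]\,\mathbb{P}[\mathsf{f}v = p] \;=\; |B_r|\cdot\frac{1}{|B_r|^2} \;=\; \frac{1}{|B_r|}.$$

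Next, I would take a union bound over all edges. Since $G$ has maximum degree at most $d$, we have $|E| \leq dn/2$, so
$$\mathbb{P}[E_3] \;\leq\; \sum_{e \in E} \mathbb{P}[\mathsf{f}u_e = \mathsf{f}v_e] \;\leq\; \frac{dn}{2|B_r|}.$$

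Finally, I would plug in the polynomial growth bound $|B_r| \geq k r^\alpha$ and the choice $r = \lceil n^{1/(\alpha-1)} \rceil$, which gives $|B_r| \geq k\, n^{\alpha/(\alpha-1)}$. Therefore
$$\mathbb{P}[E_3] \;\leq\; \frac{d}{2k}\, n^{\,1 - \alpha/(\alpha-1)} \;=\; \frac{d}{2k}\, n^{-1/(\alpha-1)} \stackrel{n\ra \infty}{\longrightarrow}0,$$
since $\alpha > 1$ forces the exponent $-1/(\alpha-1)$ to be negative. There is no real obstacle here: the estimate relies only on the fact that the number of edges is $O(n)$ while $|B_r|$ grows polynomially faster in $n$, which is precisely what the choice of $r$ was designed to guarantee. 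Note that independence of $\mathsf{f}u$ and $\mathsf{f}v$ is automatic even though $u$ and $v$ are adjacent, since vertex positions are chosen independently.
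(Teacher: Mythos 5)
Your proposal is correct and follows essentially the same argument as the paper: compute $\mathbb{P}[\mathsf{f}u=\mathsf{f}v]=1/|B_r|$ for each edge, take a union bound over the at most $O(dn)$ edges, and use $|B_r|\geq kr^\alpha\geq kn^{\alpha/(\alpha-1)}$ to conclude the bound tends to $0$. The only differences are cosmetic (you use the sharper edge count $dn/2$ and spell out the independence calculation explicitly).
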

\begin{proof}
For \( e = (u, v) \in E \), define the event \( b_e := \{\mathsf{f}u = \mathsf{f}v\} \). We aim to compute the probability that some \( b_e \) occurs:
\[
\mathbb{P}[\exists e \in E : b_e] = \mathbb{P}\left[\bigcup_{e \in E} b_e\right] \leq |E| \cdot \max_{e \in E} \mathbb{P}[b_e] \leq \frac{dn}{|B_r|}.
\]

This last inequality holds because \( |E| \leq dn \), and it's easy to see that \( \mathbb{P}[b_e] = \frac{1}{|B_r|} \) for all \( e \in E \).

Since \( |B_r| \geq k r^\alpha \geq k n^{\frac{\alpha}{\alpha-1}} \), we obtain:

\[
\mathbb{P}[E_3] = \mathbb{P}[\exists e \in E : b_e] \leq \frac{d}{k n^{\frac{1}{\alpha-1}}} \stackrel{n \to \infty}{\longrightarrow} 0.
\]

\end{proof}

\section{Heisenberg group} \label{sec: Heisenberg group}
In this paper, the Heisenberg group $\mathbb{H}$ refers to the group of $3 \times 3$ upper-triangular matrices of the form:
\[
\begin{pmatrix}
1 & a & c \\
0 & 1 & b \\
0 & 0 & 1
\end{pmatrix}, \quad a, b, c \in \mathbb{Z}.
\]
The group operation is standard matrix multiplication:
\[
\begin{pmatrix}
1 & a & c \\
0 & 1 & b \\
0 & 0 & 1
\end{pmatrix}
\begin{pmatrix}
1 & a' & c' \\
0 & 1 & b' \\
0 & 0 & 1
\end{pmatrix}
=
\begin{pmatrix}
1 & a + a' & c + c' + ab' \\
0 & 1 & b + b' \\
0 & 0 & 1
\end{pmatrix}.
\]
This group is generated by the following three matrices:
\[
x: = \begin{pmatrix}
1 & 1 & 0 \\
0 & 1 & 0 \\
0 & 0 & 1
\end{pmatrix}, \quad
y := \begin{pmatrix}
1 & 0 & 0 \\
0 & 1 & 1 \\
0 & 0 & 1
\end{pmatrix}, \quad
z := \begin{pmatrix}
1 & 0 & 1 \\
0 & 1 & 0 \\
0 & 0 & 1
\end{pmatrix}.
\]

For simplicity, we now identify $\mathbb{H}$ with $\mathbb{Z}^3$. Under this identification, the group multiplication becomes:
\[
(a, b, c) \cdot (a', b', c') = (a + a', b + b', c + c' + b'a).
\]

The three generators are represented as:
\[
x = (1, 0, 0), \quad y = (0, 1, 0), \quad z = (0, 0, 1).
\]
They satisfy the following multiplication rules:
\begin{align*}
(a, b, c) \cdot x &= (a + 1, b, c), \\
(a, b, c) \cdot y &= (a, b + 1, c + a), \\
(a, b, c) \cdot z &= (a, b, c + 1).
\end{align*}
\begin{figure}[h] % Use the figure environment to add caption and reference
    \centering
\begin{tikzpicture}[scale=1]

    % Axes
    \draw[->] (-1,-1,-1) -- (2,-1,-1) node[anchor=north east]{$x$};
    \draw[->] (-1,-1,-1) -- (-1,2,-1) node[anchor=north west]{$z$};
    \draw[->] (-1,-1,-1) -- (-1,-1,2) node[anchor=south]{$y$};

    % Draw grid points in 3D
    \foreach \a in {-1,0,1} {
        \foreach \b in {-1,0,1} {
            \foreach \c in {-1,0,1} {
                \filldraw[black] (\a,\b,\c) circle (1.5pt);
            }
        }
    }

    % Optional: Draw connecting lines for better visibility
        
        \draw[blue] (1,-1,-1) -- (1,1,0);
        %\draw[blue] (1,0,-1) -- (1,1,-0.5);
        \draw[blue] (1,-1,0) -- (1,1,1);
        %\draw[blue] (1,0,0) -- (1,1,0.5);

        \draw[blue] (0,-1,-1) -- (0,1,1);
        \draw[blue] (0,-1,0) -- (0,0,1);
        \draw[blue] (0,0,-1) -- (0,1,0);

        \draw[blue] (-1,-1,-1) -- (-1,-1,1);
        \draw[blue] (-1,0,-1) -- (-1,0,1);
        \draw[blue] (-1,1,-1) -- (-1,1,1);

    \foreach \a in {-1,0,1} {
        \foreach \c in {-1,0,1} {
            \draw[orange] (\a,-1,\c) -- (\a,1,\c);
        }
    }
    \foreach \b in {-1,0,1} {
        \foreach \c in {-1,0,1} {
            \draw[black, thin] (-1,\b,\c) -- (1,\b,\c);
        }
    }

\end{tikzpicture}
\caption{Small portion of the graph $Cay(\mathbb{H},\{x,y,z\})$. The black edges are labeled $x$, the blue edges are labeled $y$ and the orange edges are labeled $z$.}
\end{figure}
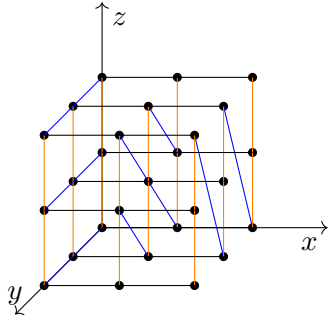
The set $\{x, y, z\}$ generates the group $\mathbb{H}$ because for any element \( g = (g_1, g_2, g_3) \in \mathbb{H} \), we have:
\[
g = y^{g_2} x^{g_1} z^{g_3}.
\]
Notice that the commutators are given by \( [x, z] = [y, z] = 1 \) and \( [x, y] = z \). Since \( z \) can be expressed in terms of \( x \) and \( y \), the set $\{x, y\}$ also generates $\mathbb{H}$.

In this paper, we will embed graphs into Cayley graphs of $\mathbb{H}$. Therefore, our notations will depend on the generating set we are considering, for a finite generating set $S\subseteq \mathbb{H}$ we denote $H_S:=Cay(\mathbb{H}, S)$. Since we primarily work with the generating sets $\{x, y\}$ and $\{x, y, z\}$, we introduce special notations for them. Let:

\[
H := H_{\{x, y\}}, \quad H_z := H_{\{x, y, z\}},
\]
\[
\lVert \cdot \rVert := \lVert \cdot \rVert_{\{x, y\}}, \quad \lVert \cdot \rVert_z := \lVert \cdot \rVert_{\{x, y, z\}},
\]
and
\[
d(\cdot, \cdot) := d_{\{x, y\}}(\cdot, \cdot), \quad d_z(\cdot, \cdot) := d_{\{x, y, z\}}(\cdot, \cdot).
\]
Now we examine some useful properties of the Heisenberg group $\mathbb{H}$:

\begin{lemma}
Let $S$ and $S'$ be two finite generating sets of $\mathbb{H}$. There exists a constant $m$ such that for every $g \in \mathbb{H}$, we have:
\[
\frac{1}{m} \lVert g \rVert_{S'} \leq \lVert g \rVert_{S} \leq m \lVert g \rVert_{S'}.
\]
\end{lemma}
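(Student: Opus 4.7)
The plan is to use the standard argument showing that word metrics with respect to different finite generating sets of a finitely generated group are bi-Lipschitz equivalent; nothing specific to $\mathbb{H}$ is actually needed.

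First, since $S$ generates $\mathbb{H}$, every element of $S'$ can be written as an $S$-word. Define
\[
m_1 := \max_{s' \in S'} \lVert s' \rVert_S, \qquad m_2 := \max_{s \in S} \lVert s \rVert_{S'}.
\]
Both are finite because $S$ and $S'$ are finite sets.

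Now take any $g \in \mathbb{H}$ and let $w = s'_1 \cdots s'_n$ be a shortest $S'$-word evaluating to $g$, so $n = \lVert g \rVert_{S'}$. For each $i$, pick an $S$-word $u_i$ with $\overline{u_i} = s'_i$ and $|u_i| \le m_1$. Then $u_1 \cdots u_n$ is an $S$-word evaluating to $g$ of length at most $m_1 n$, so $\lVert g \rVert_S \le m_1 \lVert g \rVert_{S'}$. The symmetric argument, swapping the roles of $S$ and $S'$, gives $\lVert g \rVert_{S'} \le m_2 \lVert g \rVert_S$. Setting $m := \max(m_1, m_2)$ yields the desired double inequality.

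There is no real obstacle here: the only thing to notice is that the constants $m_1, m_2$ are finite precisely because the generating sets are finite, and the substitution argument works in any group. In particular this lemma is an instance of the general fact that the identity map between Cayley graphs of a finitely generated group for two finite generating sets is a bi-Lipschitz equivalence.
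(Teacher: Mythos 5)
Your proof is correct: it is the standard substitution argument showing word metrics for two finite generating sets are bi-Lipschitz equivalent, which is exactly the well-known general fact the paper invokes (the paper gives no details, simply noting the result holds for any finitely generated group). The only cosmetic point is that letters of an $S'$-word may lie in $S'^{-1}$ as well as $S'$, but since $\lVert (s')^{-1}\rVert_S=\lVert s'\rVert_S$ your constants are unaffected.
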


\begin{proof}
This is a well-known result that holds for any finitely generated group.
\end{proof}

The following function will be important throughout this paper:

\begin{definition}[Expansion Function]
For $k \in \mathbb{N}$, define the function $\lambda_k: \mathbb{H} \to \mathbb{H}$ by:
\[
(a, b, c) \mapsto (ka, kb, k^2c).
\]
\end{definition}
The expansion function has several useful properties:

\begin{lemma}[Properties of the Expansion Function]
Let \( k \in \mathbb{N} \) and \( p \in \mathbb{H} \).
\begin{enumerate}
    \item \( \lambda_k \) is a homomorphism.
    \item \( \lVert \lambda_k(p) \rVert \leq k \lVert p \rVert \).
    \item If \( \lVert p \rVert = 1 \), then \( \lVert \lambda_k(p) \rVert = k \).
    \item If \( \lVert p \rVert \geq 2 \), then \( \lVert \lambda_k(p) \rVert \geq 2k \).
\end{enumerate}
\end{lemma}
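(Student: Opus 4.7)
The plan is to dispatch parts (1)--(3) by direct computation and abelianization, and then treat part (4) as the main work, using a ``bilinear'' upper bound on the third coordinate of any group element in terms of the word length representing it.

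For part (1), I would verify $\lambda_k((a,b,c)\cdot(a',b',c')) = \lambda_k(a,b,c)\cdot\lambda_k(a',b',c')$ directly from the explicit multiplication rule $(a,b,c)\cdot(a',b',c') = (a+a', b+b', c+c'+b'a)$; both sides equal $(k(a+a'), k(b+b'), k^2(c+c'+b'a))$, which gives the homomorphism property. Part (2) is then immediate: writing $p$ as a geodesic word $s_1\cdots s_n$ with $n=\|p\|$ and $s_i\in\{x^{\pm1},y^{\pm1}\}$, part (1) yields $\lambda_k(p)=\lambda_k(s_1)\cdots\lambda_k(s_n)$, and each factor $\lambda_k(x^{\pm1})=x^{\pm k}$, $\lambda_k(y^{\pm1})=y^{\pm k}$ has word length $k$. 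For the lower bound needed in part (3), I would use the abelianization $\pi\colon\mathbb{H}\to\mathbb{Z}^2$, $(a,b,c)\mapsto(a,b)$, which is a homomorphism that is $1$-Lipschitz when $\mathbb{Z}^2$ carries the $\ell^1$ metric (since $\pi(x), \pi(y)$ are standard basis vectors). For $\|p\|=1$, $\lambda_k(p)$ is one of $x^{\pm k}, y^{\pm k}$, so applying $\pi$ forces $\|\lambda_k(p)\|\ge k$; combined with part (2) this gives equality.

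The heart of the argument is part (4), which rests on the following bilinearity bound: if $w=s_1\cdots s_n$ is a word in $\{x^{\pm1},y^{\pm1}\}$ with $\bar w = (a,b,c)$, and $n_x,n_y$ denote the number of $x$- and $y$-letters, then $|c|\le n_x n_y \le n^2/4$. I would prove this by tracking the third coordinate along the partial products: from the displayed multiplication rules, the coordinate $c$ only changes when we right-multiply by $y^{\pm1}$, and that change equals $\pm a_i$ where $a_i$ is the $x$-exponent sum of $s_1\cdots s_{i-1}$. Since $|a_i|\le n_x$ at every step and there are $n_y$ such steps, summing gives $|c|\le n_x n_y$, and AM--GM gives $n_x n_y\le n^2/4$.

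With that bound in hand, part (4) splits into two cases for $p=(a,b,c)$ with $\|p\|\ge 2$. If $|a|+|b|\ge 2$, applying $\pi$ to $\lambda_k(p)=(ka,kb,k^2c)$ yields $\|\lambda_k(p)\|\ge k(|a|+|b|)\ge 2k$. If $|a|+|b|\le 1$, then $\|p\|\ge 2$ forces $c\ne 0$ (otherwise $p$ is the identity or a single generator, of norm $\le 1$); setting $n:=\|\lambda_k(p)\|$, the bilinearity bound applied to $\lambda_k(p)=(ka,kb,k^2c)$ gives $k^2|c|\le n^2/4$, hence $n\ge 2k\sqrt{|c|}\ge 2k$. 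The main obstacle is really just establishing the bilinearity bound cleanly; everything else is a short application of either that bound or the abelianization projection.
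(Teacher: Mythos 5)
Your proposal is correct and follows essentially the same route as the paper: the same case split on $|p_1|+|p_2|$ for part (4), the same key inequality bounding the third coordinate by the product of the numbers of $x$- and $y$-letters (the paper derives $k^2\le\alpha\beta$ from the normal form $x^{a_1}y^{b_1}\cdots x^{a_r}y^{b_r}$, you derive it by tracking partial products), and the same AM--GM finish $(\alpha+\beta)^2\ge 4\alpha\beta$. The only cosmetic difference is that you handle part (3) via the abelianization projection where the paper just checks the finitely many cases.
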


\begin{proof}
$(1)$ is trivial, $(3)$ can be verified by checking all of the options. for $(2)$:
    Consider \( s_1, \dots, s_r \in \{x, x^{-1}, y, y^{-1}\} \) such that \( p = s_1 \dots s_r \) and \( r = \lVert p \rVert \). Then \( \lambda_k(p) = \lambda_k(s_1) \dots \lambda_k(s_r) = s_1^k \dots s_r^k \), and therefore:
    \[
    \lVert \lambda_k(p) \rVert \leq k r = k \lVert p \rVert.
    \]
    For $(4)$:
    If \( (|p_1|, |p_2|) \notin \{(0, 0), (0, 1), (1, 0)\} \), it is easy to see that \( \lVert \lambda_k(p) \rVert \geq 2k \). So, let us assume \( (|p_1|, |p_2|) \in \{(0, 0), (0, 1), (1, 0)\} \). Since \( p \notin \{1, x, y, x^{-1}, y^{-1}\} \) (as \( \lVert p \rVert \geq 2 \)), we infer that \( |p_3| \geq 1 \), which implies \( |(\lambda_k(p))_3| \geq k^2 \), and therefore \( \lVert \lambda_k(p) \rVert \geq 2k \).

Since, let:
\[
\lambda_k(p) = x^{a_1} y^{b_1} \dots x^{a_r} y^{b_r}
\]
be the shortest way to express \( \lambda_k(p) \) using the generators \( \{x, y\} \). Define:
\[
\alpha = |a_1| + \dots + |a_r|, \quad \beta = |b_1| + \dots + |b_r|.
\]
We have \( \lVert \lambda_k(p) \rVert = \alpha + \beta \). Consider the expression for \( (\lambda_k(p))_3 \):
\[
(\lambda_k(p))_3 = b_1 a_1 + b_2 (a_1 + a_2) + \dots + b_r (a_1 + \dots + a_r),
\]
which gives:
\[
k^2 \leq |b_1||a_1| + |b_2|(|a_1| + |a_2|) + \dots + |b_r|(|a_1| + \dots + |a_r|) \leq \alpha \beta.
\]

Thus, since \( 0 \leq (\alpha - \beta)^2 \), we have:
\[
\alpha^2 + \beta^2 \geq 2\alpha \beta.
\]
This leads to:
\[
(\alpha + \beta)^2 = \alpha^2 + \beta^2 + 2\alpha \beta \geq 4\alpha \beta \geq 4k^2,
\]
which implies:
\[
\lVert \lambda_k(p) \rVert = \alpha + \beta \geq 2k.
\]

\end{proof}
\section{Coarse to fine in the Heisenberg group} \label{sec: Embedding into Heisenberg}
Here we prove propositions \ref{propsition: Burden relief}, \ref{proposition: removing z}, \ref{proposition: general generating set}.

\subsection{Proposition \ref{propsition: Burden relief} - Burden relief}
We present here the proof of proposition \ref{propsition: Burden relief}, which transforms combinatorial $k$-wiring into a combinatorial embedding, increasing the diameter by a factor of $k$ and the volume by a factor of $k^2$. 

The ability to convert a combinatorial wiring into combinatorial embedding in the Heisenberg group is due to the presence of $z$ as a non-identity element in the center of $\mathbb{H}$. This allows us to play with the $z$ generators freely, as they commute with all other generators. Smart play with $z$ generators is the key for separating intersecting roads. For this reason, proposition \ref{propsition: Burden relief} provides a combinatorial embedding specifically into $H_z$ rather than into $H$.

Here is the proposition:
\begin{proposition*}[Burden relief]
There is an absolute constant \( C \) such that:
For any finite graph \( G \) and a combinatorial \( k \)-wiring \( f: G \to H \) where \( u \sim v \) implies \( fu \neq fv \), there exists a combinatorial embedding \( \tilde{f}: G \to H_z \) that satisfies
$$ \text{diam}(\tilde{f}) \leq Ck \cdot \text{diam}(f) \quad \text{and} \quad \text{vol}(\tilde{f}) \leq Ck^2 \cdot \text{vol}(f). $$
\end{proposition*}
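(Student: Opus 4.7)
The plan is to construct $\tilde f$ in three conceptual layers: a stretching by the dilation $\lambda_k$ to create roughly $k$ units of $(a,b)$-space around each image point, a coloring by values in $\{1,\ldots,k\}$ that assigns a $z$-level to each (road, $H$-edge) pair using the load condition, and finally a careful set of displaced $z$-shifts that thread each road through its sequence of $z$-levels without introducing collisions. The central generator $z$ (which commutes with $x$ and $y$) is the tool that lets us separate the collisions of the original $k$-wiring into distinct $z$-layers in $H_z$.

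Concretely, I first replace $f$ by the stretched wiring $f' := \lambda_k \circ f$. Since $\lambda_k$ is a homomorphism with $\lambda_k(x)=x^k$ and $\lambda_k(y)=y^k$, every road $f_e = (p_0,\ldots,p_n)$ with generator sequence $s_1,\ldots,s_n$ is stretched to a walk through $\lambda_k(p_0),\ldots,\lambda_k(p_n)$ whose consecutive stretched points are joined by the segment $s_i^k$ of length $k$. The load condition on $f$ implies that at most $k$ roads of $f$ use any given $H$-edge (such a road passes through both endpoints, and the load at each endpoint is $\leq k$). I can therefore choose a coloring $\chi(e,t) \in \{1,\ldots,k\}$ for each pair (road $e$, $H$-edge $t$ on $f_e$) such that distinct roads using the same $t$ receive distinct colors, together with vertex levels $g: VG \to \{k+1,\ldots,2k\}$ that are injective on each fibre of $f$.

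The embedding is then defined by $\tilde f u := \lambda_k(fu)\,z^{g(u)}$, with $\tilde f_e$ a walk in $H_z$ that traverses $f'_e$ with its $i$-th segment held at $z$-level $\chi_i := \chi(e, \{p_{i-1},p_i\})$. Because $z$ commutes with $x$ and $y$, the whole segment $s_i^k$ can be translated by $z^{\chi_i}$ without changing its $(a,b)$-projection, so segments of distinct roads sharing an $H$-edge are forced onto distinct $z$-layers. Between consecutive segments (at the corners $\lambda_k(p_i)$) and between the vertex levels and the first/last segment levels, the road must execute short $z$-shifts to move between the current level and the next. To avoid collisions with other roads' corner transitions at the same $\lambda_k(p_i)$ and with other vertex images in the same column, these $z$-shifts are carried out at displaced $(a,b)$-positions of the form $\lambda_k(p_i)\,x^{j_1} y^{j_2}$ with $(j_1,j_2) \in \{1,\ldots,k-1\}^2$ chosen uniquely among the corners at $p_i$. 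A direct computation with the group law of $\mathbb{H}$ shows that such off-axis positions are not on any segment or main column, so the only remaining potential conflicts among them are between distinct corner transitions at the same $p_i$, and these are killed by the uniqueness of the chosen displacement. The hypothesis $u \sim v \Rightarrow fu \neq fv$ guarantees that each road has at least one segment so that this construction is meaningful.

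Verifying that $\tilde f$ is a combinatorial embedding then reduces to a case analysis of collision types (segment-segment, segment-corner, corner-corner, vertex-segment, vertex-corner), each ruled out by either the coloring $\chi$, the vertex-level choice $g$, or the uniqueness of the displacement. The bounds follow from a direct count: each road has length bounded by $O(k)\cdot |f_e|$ since each of the $n$ stretched segments of length $k$ contributes $O(k)$ transition steps, giving $\mathrm{diam}(\tilde f) \leq Ck\cdot\mathrm{diam}(f)$; each stretched image point $\lambda_k(p)$ contributes $O(k)$ vertices in its main column and $O(k^2)$ vertices from the segments and displaced transitions around it, giving $\mathrm{vol}(\tilde f) \leq Ck^2\cdot\mathrm{vol}(f)$. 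I expect the main obstacle to be the careful bookkeeping of the displaced corner transitions, where the non-abelian product law of $\mathbb{H}$ dictates precisely when two displaced $z$-shifts at different base points could accidentally coincide in $H_z$; the bulk of the proposition's proof should consist of organizing these cases.
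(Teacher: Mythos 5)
Your overall strategy --- dilate by a factor proportional to $k$, use the central element $z$ to lift colliding roads to distinct $z$-levels chosen by a coloring, and perform the level changes at slightly displaced positions --- is the same as the paper's, but there is a genuine gap in your coloring. You assign $\chi(e,t)\in\{1,\dots,k\}$ to pairs (road, $H$-edge) and only require distinct colors for distinct roads using the \emph{same} $H$-edge. This does not separate two roads that cross at an $H$-vertex without sharing an $H$-edge: say $f_e$ passes through $px^{-1},p,px$ and $f_t$ through $py^{-1},p,py$, with $e,t$ non-adjacent in $G$. Your segments are full translates of $s_i^k$, so both roads contain corner points of the form $\lambda_k(p)z^{c}$, and nothing forbids $\chi(e,\{p,px\})=\chi(t,\{p,py\})$, which gives $\tilde f_e\cap\tilde f_t\neq\emptyset$. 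The same weakness hits your corner gadgets: the connecting path from $\lambda_k(p)z^{\chi_i}$ out to the displaced position $\lambda_k(p)x^{j_1}y^{j_2}$ runs along the grid line of the $H$-edge $\{p,px\}$ at level $\chi_i$, and a different road may legitimately occupy that edge at that very level, since your coloring constrains only roads that themselves use $\{p,px\}$. So ``roads do not intersect'' is not delivered by uniqueness of the displacements alone; crossings at shared vertices --- the main thing a wiring has to untangle --- are exactly the unconstrained case.

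The repair is what the paper does: color pairs $(p,e)$ with $p\in f_e$ (a point of $H$ on the road), demanding distinct colors whenever two distinct roads pass through the same $H$-vertex --- this is exactly what the load bound supports, since at most $k$ roads pass through any vertex of $H$ --- and in fact the paper demands more: it builds the residues of $p_1,p_2$ modulo $4$ into the color, so that any two non-adjacent roads passing within $\{x,y\}$-distance $3$ of one another have disjoint color sets, and it dilates by $K=1000k$ so that far-apart roads stay far apart and the remaining potential coincidences are excluded by inspecting coordinates modulo $K$. Your vertex-level device (levels $\{k+1,\dots,2k\}$, injective on fibres of $f$) and your diameter/volume accounting are fine and match the paper in spirit (there, vertex colors occupy $\{0,\dots,16k-1\}$ and inner-point colors $\{16k,\dots,32k-1\}$, and the volume bound uses $\mathrm{vol}(f)\geq(|V|+\sum_e|f_e|)/2k$); the missing ingredient is a coloring strong enough to separate roads that merely meet at a vertex or pass close to each other, not only roads sharing an $H$-edge.
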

\begin{proof}
Let $G$ be a finite graph and $f:G\ra H$ be a combinatorial $k$-wiring such that $u\sim v\Rightarrow fu\ne fv$.

The idea of the proof is to scale \( f \) by a large factor, providing enough space to resolve local intersections of roads. This resolution is achieved using colors. We assign a color to each vertex \( v \in V \), and instead of coloring each edge \( e \in E \), we assign colors to each pair \( (p, e) \), where \( p \in f_e \) and \( e \in E \). Using the colors as a guide, we manipulate the \( z \) generators to separate the roads.

We break the proof of the proposition into four steps:
 \begin{itemize}
     \item Assigning colors.
     \item Using the colors to define the new combinatorial wiring $\tilde f$. 
     \item Proving that $\tilde f$ is a combinatoral embedding.
     \item Proving that $\tilde f$ has small diameter and volume.
 \end{itemize}

\subsubsection{Assigning colors}
We aim do define coloring function $$c:V\cup \{(p,e)\mid p\in f_e\}\ra \{0,...,32k-1\}$$ with the following properties:
\begin{enumerate}
\item $c$ separates vertices - if $v\ne u\in V$ and $fu=fv$ then $c(v)\ne c(u)$.
\item $c$ separate roads - for non-adjacent edges $e,t\in E$ and points $p\in f_e,q\in f_t$ we have $$d(p,q)\leq 3\Rightarrow c(p,e)\ne c(p,t).$$
\item  $c$ separates vertices from inner points of roads - $$c(\{(p,e)\mid p\in f_{e},\text{$p$ is not an endpoint of $f_e$}\})\cap c(V)=\emptyset.$$
\end{enumerate}

For numbers $a,b\in \N$ we denote the remainder of \( a \) when divided by \( b \) by \( \operatorname{mod}_b(a) \in \{0,...,b-1\}\). 

Since \( f \) is a combinatorial $k$-wiring, there are functions \( \eta_V: V \to \{0, \dots, k-1\} \) and \( \eta_E: \{(p, e) \mid p \in f_e\} \to \{0, \dots, k-1\} \), which we denote collectively by \(\eta\), such that:
\begin{itemize}
    \item If \( u \neq v \in V \) and \( fu = fv \), then \( \eta(u) \neq \eta(v) \),
    \item and if \( e \neq t \in E \) and \( p \in f_e \cap f_t \), then \( \eta(p, e) \neq \eta(p, t) \).
\end{itemize}

For a point \( p  \in \mathbb{H} \), we define \(
\alpha(p) := 4  \operatorname{mod}_4(p_1) + \operatorname{mod}_4(p_2).
\) Thus, \( \alpha \) takes values in \( \{0, \dots, 15\} \), and \( \alpha(p) = \alpha(q) \) if and only if \( p_1 \equiv q_1 \pmod{4} \) and \( p_2 \equiv q_2 \pmod{4} \).

Now we are ready to define $c$:
For vertex $v\in V$, we define its color from $\{0,...,16k-1\}$ by $$c(v):=\alpha(fv)k + \eta(v).$$And for pair $(p,e)$ with $p\in f_e$ define its color from $\{0,...,32k-1\}$ by: 
\[
c(p, e) := \begin{cases}
    c(v) & \text{if } p = fv \text{ for some vertex } v \in e, \\
    (16 + \alpha(p))k + \eta(p,e) & \text{otherwise}.
\end{cases}
\]
This function is well-defined since \( u\sim v\Rightarrow fu \neq fv \).

It's easy to see that $c$ satisfy properties $(1)$ and $(3)$, to see it also satisfies property $(2)$:
Let \( e \) and \( t \) be two non-adjacent edges in \( E \), and take points \( p \in f_e \) and \( q \in f_t \).
Assume \( d(p, q) \leq 3 \) and suppose, for contradiction, that \( c(p, e) = c(q, t) \). This implies \( \alpha(p) = \alpha(q) \), and since \( d(p, q) \leq 3 \), we have \( p = q \). Now, there are two possible cases:
\begin{enumerate}
\item \( c(p, e) = c(u) \) for some vertex \( u \in e \). Since $c(p,e)<16k$ we must have \( c(q, t) = c(v) \) for some vertex \( v \in t \). So, $u$ and $v$ are different vertices with same $f$-image, hence their color is different which leads to $c(p,e)\ne c(q,t)$ - a contradiction!
\item \( c(p, e) = (16 + \alpha(p))k + \eta(p,e) \). Since $c(p,e)\geq 16k$ we have \( c(q, t) = (16 + \alpha(q))k + \eta(q,t) \). But since $p=q$ then $\eta(p,e)\ne \eta(q,t)$ which means $c(p,e)\ne c(q,t)$ - a contradiction!
\end{enumerate}

\subsubsection{Defining the combinatorial wiring $\tilde f$}
Let \( K := 1000k \), \( \lambda := \lambda_K \) and \( S := \{x, y, x^{-1}, y^{-1}\} \).

For vertex \( v \in V \), define:
\[
\tilde{f}v := z^{c(v)} \lambda(fv).
\]
Notice that \( \tilde{f}|_V \) is injective.

Next, we define the roads of \( \tilde{f} \):
For \( s \in S \) and \( \alpha, \alpha' \in \{0, \dots, 32k-1\} \), define the word:
\[
w_{s,\alpha,\alpha'} := \begin{cases} 
s^{500k + \alpha} y z^{\alpha' - \alpha} y^{-1} s^{500k - \alpha} & \text{if } s \in \{x, x^{-1}\}, \\
s^{500k + \alpha} x z^{\alpha' - \alpha} x^{-1} s^{500k - \alpha} & \text{if } s \in \{y, y^{-1}\}.
\end{cases}
\]
\begin{itemize}
    \item For any point \( p \in \mathbb{H} \), the walk \( (z^\alpha \lambda(p); w_{s, \alpha, \alpha'}) \) is a walk from \( z^\alpha \lambda(p) \) to \( z^{\alpha'} \lambda(ps) \).
    \item The length of this word is $K+2+|\alpha'-\alpha|\leq 1100k$.
    \item Every point on this walk is at most \( 600k \) \( \{x, y\} \)-distance away from one of its endpoints, which can easily be verified by replacing each letter \( z \) with the word \( xyx^{-1}y^{-1} \).
\end{itemize}

So For edge $e\in E$, if $f_e=(p_1,...,p_r)$ we let $\alpha_i:=c(p_i,e)$ for $i=1,...,r$ and we define 
\[
\tilde{f}_e := \left( z^{\alpha_1} \lambda(p_1); w_{p_1^{-1}p_{2}, \alpha_1, \alpha_2}, \dots, w_{p_{r-1}^{-1}p_{r}, \alpha_{r-1}, \alpha_r} \right).
\]
This walk is a concatenations of walks: from $z^{\alpha_1}\lambda(p_1)$ to $z^{\alpha_2}\lambda(p_2)$, from $z^{\alpha_2}\lambda(p_2)$ to $z^{\alpha_3}\lambda(p_3)$, and so on, until the walk from $z^{\alpha_{r-1}}\lambda(p_{r-1})$ to $z^{\alpha_r}\lambda(p_r)$. 

In particular it begins at \( z^{\alpha_1} \lambda(p_1)\) and ends at \( z^{\alpha_r} \lambda(p
_r)\) - which confirms that $\tilde f$ is combinatorial wiring, meaning the endpoints of each road are the images of the endpoints of the associated edge. 

\begin{tikzpicture}[remember picture]
    % First plot as a node
    \node (plot1) at (0,0) {
        \begin{tikzpicture}
            \tdplotsetmaincoords{70}{60}
            \begin{scope}[tdplot_main_coords,
                cube/.style={very thick,black},
                grid/.style={very thin,gray},
                axis/.style={->,blue,thin}]

            \draw[axis] (0,0,0) -- (4,0,0) node[anchor=west]{$x$};
            \draw[axis] (0,0,0) -- (0,4,0) node[anchor=west]{$y$};
            \draw[axis] (0,0,0) -- (0,0,3) node[anchor=west]{$z$};
            
            \draw[cube] (0,0,1) -- (2,0,1) -- (2,0.5,1.5) -- (2,0.5,2) --(2,0,1.5)--(4,0,1.5);
                
            \end{scope}
        \end{tikzpicture}
    };
    \node at (plot1.south) [above=130pt] {The walk $(z^2\lambda(p);w_{x,2,3})$.}; % Title for the first plot

    % Second plot as a node
    \node (plot2) at (8,0) { % Adjust horizontal positioning as needed
        \begin{tikzpicture}
            \tdplotsetmaincoords{70}{60}
            \begin{scope}[tdplot_main_coords,
                cube/.style={very thick,black},
                grid/.style={very thin,gray},
                axis/.style={->,blue,thin}]

            \draw[axis] (0,0,0) -- (4,0,0) node[anchor=west]{$x$};
            \draw[axis] (0,0,0) -- (0,4,0) node[anchor=west]{$y$};
            \draw[axis] (0,0,0) -- (0,0,3) node[anchor=west]{$z$};
            
            \draw[cube] (1,0,1) -- (1,2,2) -- (1.5,2,2) -- (1.5,2,1.5) --(1,2,1.5)--(1,4,2.5);
            
            \end{scope}
        \end{tikzpicture}
    };
    \node at (plot2.south) [above=130pt] {The walk $(z^3\lambda(p);w_{y,3,2})$.}; % Title for the first plot
    
    \end{tikzpicture}

\subsubsection{Proving that $\tilde f$ is a combinatorial embedding}\label{tilde f is embedding}
We already saw that $\tilde f|_V$ is injective so it remains to show that "Roads do not intersect" and "Vertices are isolated from roads".

\textbf{\textit{"Roads do not intersect"}}:
Let \( e,t\in E \) be non-adjacent edges, take \( \xi \in \tilde{f}_e \) and \( \xi' \in \tilde{f}_t \). By the definition of \( \tilde{f} \), there are vertices \( p\sim p', q\sim q' \) in the graph $H$ such that if we write \( \alpha := c(p, e), \alpha' := c(p', e) \) and \( \beta := c(q, t), \beta' := c(q', t) \) we get
\[
\xi \in \left( z^{\alpha} \lambda(p); w_{p^{-1}p', \alpha, \alpha'} \right) \quad \text{and} \quad \xi' \in \left( z^{\beta} \lambda(q); w_{q^{-1}q', \beta, \beta'} \right).
\]
We prove that \( \xi \neq \xi' \):

If \( d(\{p, p'\}, \{q, q'\}) \geq 2 \), then \( d(\{\lambda(p), \lambda(p')\}, \{\lambda(q), \lambda(q')\}) \geq 2K \). The point \( \xi \) is at most \( 600k \) \( \{x, y\} \)-distance away from the endpoints of the walk \( \left( z^{\alpha} \lambda(p); w_{p^{-1}p', \alpha, \alpha'} \right) \), and these endpoints are within \( 200k \) $\{x,y\}$-distance from \( \{\lambda(p), \lambda(p')\} \). Thus, \( d(\xi, \{\lambda(p), \lambda(p')\}) \leq 800k \). Similarly, \( d(\xi', \{\lambda(q), \lambda(q')\}) \leq 800k \), so \( \xi \neq \xi' \).

If \( d(\{p, p'\}, \{q, q'\}) \leq 1 \), then all distances between \( p,p', q, q' \) are at most 3. By property $(2)$ of \( c \), we get \( \{\alpha, \alpha'\} \cap \{\beta,\beta'\} = \emptyset \). If \( \xi = \xi' \), then, without loss of generality, \( \operatorname{mod}_K(\xi_3) \notin  \{\alpha, \alpha'\} \). Examining the definition of\( \left( z^{\alpha} \lambda(p); w_{p^{-1}p', \alpha, \alpha'} \right) \), we conclude that this is only possible if \( \{\operatorname{mod}_K(\xi_1), \operatorname{mod}_K(\xi_2)\} \in \{\{1, 500k + \alpha\}, \{1, 500k - \alpha\}\} \). However, this is impossible, since $\xi\in \left( z^{\beta} \lambda(q); w_{q^{-1}q', \beta, \beta'} \right)$ implies that the set $\{\operatorname{mod}_K(\xi_1), \operatorname{mod}_K(\xi_2)\}$ either contain 0 or is one of \( \{1, 500k + \beta\}, \{1, 500k - \beta\}\).

\smallskip

\textbf{\textit{"Vertices are isolated from roads"}}:
Let \( e \in E \) be an edge and \( v \notin e \) be a vertex, let $\xi \in \tilde f_e$, assume towards contradiction that $\tilde fv= \xi$. Notice that $(\operatorname{mod}_K((\tilde fv)_1), \operatorname{mod}_K((\tilde fv)_2))=(0,0)$, the only points in $\tilde f$ with this property are of the form $z^{c(p,e)}\lambda(p)$ for $p\in f_e$, hence $\xi = z^{c(p,e)}\lambda(p)$ for some $p\in f_e$. Recall that $\tilde fv=z^{c(v)}\lambda(fv)$.
\begin{itemize}
    \item If $p$ is inner point of $f_e$ then $c(v)\ne c(p,e)$ by property $(3)$ of $c$, which leads to $\tilde fv\ne \xi$.
    \item If $p$ is an endpoint of $f_e$ then $\xi=\tilde fu$ for some vertex $u\in e$, which implies $\tilde fv\ne \xi$ since $\tilde f|_V$ is injective. 
\end{itemize}

\subsubsection{Proving that $\tilde f$ has small diameter and volume}
For the diameter, observe that \( \text{Im}(\tilde{f}) \subseteq N_{800k}(\lambda_K(\text{Im}(f))) \) with respect to the $\{x, y\}$-metric, as demonstrated in \ref{tilde f is embedding}. Thus, the diameter of \( \tilde{f} \) in the $\{x, y\}$-metric is at most \( K \cdot \text{diam}(f) + 1600k \leq 2600k \cdot \text{diam}(f) \). Therefore, the diameter of \( \tilde{f} \), which is calculated with respect to the $\{x, y, z\}$-metric, is bounded from above by a constant multiple of the this bound.

Analyzing the volume is a bit more complicated: Notice that since \( f \) is a combinatorial $k$-wiring, we have \( \text{vol}(f)\geq \frac{|V| + \sum_{e \in E} |f_e|}{2k} \).
Now, observe that for each $e \in E$, the road \( \tilde{f}_e \) is a walk with at most $1100k$ times more edges than \( f_e \). Hence \[|\tilde{f}_e| \leq |\text{edges in } \tilde{f}_e| + 1 \leq 1100k |\text{edges in } f_e| + 1 \leq 2200k|f_e| + 1 \leq 2201k|f_e| .\]
\begin{itemize}
    \item The first inequality follows because the number of vertices in a graph is always at most the number of edges plus 1.
    \item The third inequality follows because the number of edges in a graph is at most half the maximal degree of the graph multiplied by the number of vertices, and since \( f_e \) is a subgraph of \( H \), its maximal degree is at most 4.
\end{itemize}
Thus, we obtain \[ \text{vol}(\tilde{f}) \leq |V| + \sum_{e \in E} |\tilde{f}_e| \leq 2201k(|V| + \sum_{e \in E} |f_e|) \leq 4402k^2 \text{vol}(f) .\]
\end{proof}

\subsection{Proposition \ref{proposition: removing z} - Removing $z$}
In proposition \ref{propsition: Burden relief}, we obtained a combinatorial embedding into $H_z$. However, as shown in Lemma \ref{proposition: general generating set}, in order to achieve a combinatorial embedding into $H_S$ for a general finite generating set $S$, we must begin with a combinatorial embedding into $H$. Therefore, proposition \ref{proposition: removing z} converts a combinatorial embedding into $H_z$ to a combinatorial embedding into $H$.
\begin{proposition*}[Removing $z$]
\label{lemma removing z}
There is an absolute constant $C$ such that:
If $f:G\ra H_z$ is a combinatorial embedding, then there is a combinatorial embedding $\tilde f:G\ra  H$ with $$diam(\tilde{f})\leq C\cdot diam(f)\quad\text{and}\quad vol(\tilde{f})\leq C\cdot vol(f).$$
\end{proposition*}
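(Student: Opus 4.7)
The plan is to construct $\tilde f$ by scaling $f$ by a fixed factor $M\geq 2$ via the expansion map $\lambda_M$, while replacing each $z^{\pm 1}$-step of each road by a short commutator walk in $H$. The key identity is $z=[x,y]=xyx^{-1}y^{-1}$, so $z^{M^2}$ is realized in $H$ by the walk $(xyx^{-1}y^{-1})^{M^2}$ of length $4M^2$; the crucial feature of this walk is that it stays in the ``slab'' of points $(a,b,c)\in\mathbb{H}$ with $(a,b)\in\{(0,0),(1,0),(1,1),(0,1)\}$, and it visits the $z$-axis only at $(0,0,j)$ for $j=0,1,\dots,M^2$.

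Concretely, I would define $\tilde f(v):=\lambda_M(f(v))$ for $v\in VG$ (which is injective since $\lambda_M$ is a homomorphism and $f|_{VG}$ is injective). For each edge $e\in EG$ with road $f_e=(g_0,\dots,g_r)$, define $\tilde f_e$ by concatenating, for each step $g_i\to g_{i+1}$, a walk in $H$ from $\lambda_M(g_i)$ to $\lambda_M(g_{i+1})$: an $x^{\pm 1}$-step becomes $(\lambda_M(g_i);x^{\pm M})$, a $y^{\pm 1}$-step is analogous, a $z$-step becomes $(\lambda_M(g_i);(xyx^{-1}y^{-1})^{M^2})$, and a $z^{-1}$-step becomes $(\lambda_M(g_i);(yxy^{-1}x^{-1})^{M^2})$. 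Each replacement walk has length at most $4M^2$, a constant, so $\text{diam}(\tilde f)\leq O(1)\cdot\text{diam}(f)$ and $\text{vol}(\tilde f)\leq O(1)\cdot\text{vol}(f)$.

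The central geometric fact driving the proof is that for $M\geq 2$, every intermediate (non-endpoint) vertex of every replacement walk lies outside the scaled lattice $\lambda_M(\mathbb{H})=M\mathbb{Z}\times M\mathbb{Z}\times M^2\mathbb{Z}$. For the $x^{\pm M}$- and $y^{\pm M}$-walks this is immediate since the intermediate points have first (resp.\ second) coordinate in $\{1,\dots,M-1\}$, which is not divisible by $M$. For the commutator walk, any intermediate point $(a,b,c)$ has $(a,b)\in\{(0,0),(1,0),(1,1),(0,1)\}$, so belonging to $\lambda_M(\mathbb{H})$ forces $a=b=0$; then $c$ would need to be divisible by $M^2$, but the intermediate points with $(a,b)=(0,0)$ have $c\in\{1,\dots,M^2-1\}$, none of which is divisible by $M^2$.

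This observation immediately yields property (3) of the combinatorial embedding: if $\tilde fv\in\tilde f_e$, then $\tilde fv=\lambda_M(fv)\in\lambda_M(\mathbb{H})$ cannot equal an intermediate walk point, so $\tilde fv=\lambda_M(g)$ for some vertex $g$ of the walk $f_e$, forcing $fv=g\in f_e$ and contradicting property (3) for $f$ when $v\notin e$. Property (2) is handled by a case analysis over the step-walk types of two putatively colliding non-adjacent roads $\tilde f_e$ and $\tilde f_t$: in each subcase the collision equation $\lambda_M(g_i)\cdot g'=\lambda_M(h_j)\cdot g''$ (with $g',g''$ in the explicit finite set of walk vertices) forces $h_j^{-1}g_i$ to lie in a small finite set such as $\{1,z^{\pm 1},x^{\pm 1},y^{\pm 1}\}$, and each such possibility places a common vertex into $f_e\cap f_t$, contradicting non-adjacency in $f$. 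The main obstacle is simply executing this case analysis systematically across all pairs of step-walk types (including mixed cases such as an $x$-walk meeting a commutator walk), but each subcase reduces to a direct computation with the group law of $\mathbb{H}$ and the explicit form of the walks.
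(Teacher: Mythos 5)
Your proposal is correct and follows essentially the same route as the paper, which takes the special case $M=2$: scale by the homomorphism $\lambda_2$, replace $x^{\pm1},y^{\pm1}$ by their squares and $z^{\pm1}$ by $(xyx^{-1}y^{-1})^{\pm 4}$, use the fact that all intermediate points of the replacement walks avoid the scaled lattice $2\mathbb{Z}\times 2\mathbb{Z}\times 4\mathbb{Z}$ to isolate vertices from roads, and settle road--road disjointness by the case analysis over step types that you only sketch but which constitutes the bulk of the paper's argument. Two small points: the deferred case analysis does work out exactly as you predict (every surviving collision forces a common vertex of $f_e$ and $f_t$), and the volume bound is cleaner if stated, as in the paper, via the containment $\operatorname{Im}\tilde f\subseteq N_{O(1)}\bigl(\lambda_M(\operatorname{Im}f)\bigr)$ rather than by summing replacement-walk lengths over all steps, since without a load bound that sum need not be comparable to $\mathrm{vol}(f)$.
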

\begin{proof}
We have a combinatorial embedding \( f: G \to H_z \). Our goal is to expand \( f \) sufficiently so that we can replace each edge labeled \( z \) by the 4-edge walk \( xyx^{-1}y^{-1} \) while ensuring that the roads of the embedding remain disjoint.

Let \( \lambda := \lambda_2 \) and \( S := \{x, x^{-1}, y, y^{-1}, z, z^{-1}\} \). We want to define a new combinatorial embedding \( \tilde f: G\ra H \). We begin by defining \( \tilde{f}|_V := \lambda \circ f|_V \), which is injective because both \( f|_V \) and \( \lambda \) are injective.

For each generator \( s \in S \), we define the word:
\[
\tilde{s} := \begin{cases}
s^2 & \text{if } s \in \{x, y, x^{-1}, y^{-1}\}, \\
(xyx^{-1}y^{-1})^4 & \text{if } s = z, \\
(yxy^{-1}x^{-1})^4 & \text{if } s = z^{-1}.
\end{cases}
\]

Observe that for any point \( p \in \mathbb{H} \), the walk \( (\lambda(p); \tilde{s}) \) goes from \( \lambda(p) \) to \( \lambda(ps) \).

For an edge \( e = \{u, v\} \in E \), write \( f_e = (p; s_1, \dots, s_r) \) and define:
\[
\tilde{f}_e := (\lambda(p); \tilde{s_1}, \dots, \tilde{s_r}).
\]
This defines a walk with endpoints \( \tilde{f}u, \tilde{f}v \).

\begin{tikzpicture}[remember picture]
    % First plot as a node
    \node (plot1) at (0,0) {
        \begin{tikzpicture}
            \tdplotsetmaincoords{70}{60}
            \begin{scope}[tdplot_main_coords,
                cube/.style={very thick,black},
                grid/.style={very thin,gray},
                axis/.style={->,blue,thin}]
                
                % Draw grid, axes, and cube...
                \foreach \x in {-0.5,0,...,3}
                    \foreach \y in {-0.5,0,...,3} {
                        \draw[grid] (\x,-0.5) -- (\x,3);
                        \draw[grid] (-0.5,\y) -- (3,\y);
                    }
                \draw[axis] (0,0,0) -- (2.5,0,0) node[anchor=west]{$x$};
                \draw[axis] (0,0,0) -- (0,2.5,0) node[anchor=west]{$y$};
                \draw[axis] (0,0,0) -- (0,0,2.5) node[anchor=west]{$z$};

            % drow the walk
            \draw[cube] (0.5,0.5,0) -- (0.5,0.5,0.5);
                
            \end{scope}
        \end{tikzpicture}
    };
    \node at (plot1.south) [above=120pt] {The walk $(g;z)$.}; % Title for the first plot

    % Second plot as a node
    \node (plot2) at (8,0) { % Adjust horizontal positioning as needed
        \begin{tikzpicture}
            \tdplotsetmaincoords{70}{60}
            \begin{scope}[tdplot_main_coords,
                cube/.style={very thick,black},
                grid/.style={very thin,gray},
                axis/.style={->,blue,thin}]

                % Draw grid, axes, and cube...
                \foreach \x in {-0.5,0,...,3}
                    \foreach \y in {-0.5,0,...,3} {
                        \draw[grid] (\x,-0.5) -- (\x,3);
                        \draw[grid] (-0.5,\y) -- (3,\y);
                    }
                \draw[axis] (0,0,0) -- (2.5,0,0) node[anchor=west]{$x$};
                \draw[axis] (0,0,0) -- (0,2.5,0) node[anchor=west]{$y$};
                \draw[axis] (0,0,0) -- (0,0,2.5) node[anchor=west]{$z$};

            % drow the walk
            \draw[cube] (1,1,0) -- (1.5,1,0) -- (1.5,1.5,1.5) -- (1,1.5,1.5) -- (1,1,0.5);
            \draw[cube] (1,1,0.5) -- (1.5,1,0.5) -- (1.5,1.5,2) -- (1,1.5,2) -- (1,1,1);
            \draw[cube] (1,1,1) -- (1.5,1,1) -- (1.5,1.5,2.5) -- (1,1.5,2.5) -- (1,1,1.5);
            \draw[cube] (1,1,1.5) -- (1.5,1,1.5) -- (1.5,1.5,3) -- (1,1.5,3) -- (1,1,2);
            \end{scope}
        \end{tikzpicture}
    };
    \node at (plot2.south) [above=120pt] {The walk $(\lambda(g);\tilde z)$.}; % Title for the first plot

    % Draw a curvy arrow between plots
    \draw[->, thick, red] ([xshift=0.2cm]plot1.east) to[out=20, in=160] ([xshift=-0.2cm]plot2.west);
    \end{tikzpicture}
To understand the proof it is important to understand the walk \( (\lambda(g); \tilde{z}) \) for some \( g \in \mathbb{H} \). The walk proceeds as follows:
\begin{align*}
\lambda(g)=&(2g_1,&&2g_2,   &&4g_3)&\stackrel{x}{\ra}\\
&(2g_1+1,&&2g_2,   &&4g_3)&\stackrel{y}{\ra}\\
&(2g_1+1,&&2g_2+1, &&4g_3+2g_1+1)&\stackrel{x^{-1}}{\ra}\\
&(2g_1,&&2g_2+1, &&4g_3+2g_1+1)&\stackrel{y^{-1}}{\ra}\\
\lambda(g)z=&(2g_1,&&2g_2,   &&4g_3+1)&\stackrel{x}{\ra}\\
& &&\vdots\\
&(2g_1+1,&&2g_2+1,   &&4g_3+2g_1+4)&\stackrel{x^{-1}}{\ra}\\
&(2g_1,&&2g_2+1,   &&4g_3+2g_1+4)&\stackrel{y^{-1}}{\ra}\\
\lambda(gz)=\lambda(g)z^4=&(2g_1,&&2g_2,   &&4g_3+4).
\end{align*}

Now we show that $\tilde f$ is a combinatorial embedding:

\textbf{\textit{"Roads do not intersect"}}:
Let \( e, t \in E \) be two non-adjacent edges. We want to show that \( \tilde{f}_e \cap \tilde{f}_t = \emptyset \). Suppose, for the sake of contradiction, that there exists a point \( \xi \in \tilde{f}_e \cap \tilde{f}_t \). Then, there are points \( p, q \in \mathbb{H} \) and generators \( s, u \in \{x, y, z\} \) such that \( \{p, ps\}\) is an edge in the walk \(f_e \) and \( \{q, qu\}\) is an edge in the walk  \(f_t \) such that $$\xi\in (\lambda(p);\tilde s)\cap (\lambda(q);\tilde u).$$

First, observe that \( \xi \) cannot be an endpoint of either walk. If \( \xi \) were an endpoint of one, it would also have to be the endpoint of the other, since the endpoints are the only points inside $2\Z\times 2\Z \times 4\Z$. However, \( \xi \) cannot be the endpoint of both walks because the endpoints \( \{\lambda(p), \lambda(ps)\} \) and \( \{\lambda(q), \lambda(qu)\} \) are distinct.

We analyze the cases based on the generator \( s \):
\begin{enumerate}

\item \textbf{If $\bm{s=x}$}: Since \( \xi \) is not an endpoint of $(\lambda(p);\tilde s)$, we have \( \xi = \lambda(p)x \).\\
\underline{If $u=x$}, then $\xi=\lambda(q)x$, which implies $p=q$, a contradiction. \\ 
\underline{If $u=y$}, then $\xi_1\equiv0\pmod 2$, a contradiction!\\
\underline{If $u=z$}, the condition \( \xi = \lambda(p)x \) implies \( \xi \in (2\mathbb{Z}+1) \times 2\mathbb{Z} \times 4\mathbb{Z} \), which holds in \((\lambda(q); \tilde{z}) \) only if \( \xi = \lambda(q)x \), a contradiction.
\item \textbf{If $\bm{s=y}$}: Then \( \xi = \lambda(p)y \). The cases where \( u \in \{x, y\} \) follow from similar reasoning as above. If \( u = z \), the condition \( \xi = \lambda(p)y \) implies \( \xi \in 2\mathbb{Z} \times (2\mathbb{Z}+1) \times 4\mathbb{Z} \), which is possible in \( (\lambda(q); \tilde{z}) \) only if \( \xi \in \{\lambda(q)z^2y, \lambda(qu)y\} \), a contradiction.
\item \textbf{If $\bm{s=z}$}.   If \( u \in \{x, y\} \), the cases were discussed previously. Assume \( u = z \).
 \begin{enumerate}
        \item  If \( (p_1, p_2) \neq (q_1, q_2) \), then \( (\xi_1, \xi_2) \) belongs to two distinct squares: \( [2p_1, 2p_1+1] \times [2p_2, 2p_2+1] \) and \( [2q_1, 2q_1+1] \times [2q_2, 2q_2+1] \), which is a contradiction.
        \item If $(p_1,p_2)=(q_1,q_2)$, recall that $\{p,pz\}$ and $\{q,qz\}$ are disjoint, hence $|p_3-q_3|\geq 2$.\\
            \underline{If $\xi_2\in 2\Z$}, we have $\xi_3\in [4p_3,4p_3+4]\cap [4q_3,4q_3+4]=\emptyset$, a contradiction.\\
            \underline{If $\xi_2\in 2\Z+1$}, we have $\xi_3\in [4p_3+2p_1+1,4p_3+2p_1+4]\cap [4q_3+2q_1+1,4q_3+2q_1+4]=\emptyset$, a contradiction.
    \end{enumerate}
    
\end{enumerate}
Thus, \( \xi \) cannot exist, and we have shown that "Roads do not intersect".

\smallskip

\textbf{\textit{"Vertices are isolated from roads"}}:
Let \( e \in E \) and \( v \in V \setminus e \). Suppose, for contradiction, that \( \tilde{f}v \in \tilde{f}_e \). Since \( \tilde{f}v = \lambda(fv) \), we know that \( \tilde{f}v \in (2\mathbb{Z})^2 \times 4\mathbb{Z} \). From the definition of \( \tilde{f}_e \), the only points of \( \tilde{f}_e \) that satisfy this condition are those in \( \lambda(f_e) \). Thus, \( \lambda(fv) \in \lambda(f_e) \), which implies that \( fv \in f_e \). This is a contradiction, since \( f \) is a combinatorial embedding.

\smallskip

Lastly, we show that \( \tilde{f} \) has a small diameter and volume:  
Notice that \( \operatorname{im} \tilde{f} \subseteq N_8(\lambda_2(\operatorname{im} f)) \) with respect to the $\{x,y\}$-metric. Therefore, there exists an absolute constant \( C \) such that:
$$diam(\tilde{f})\leq C\cdot diam(f)\quad\text{and}\quad vol(\tilde{f})\leq C\cdot vol(f).$$
\end{proof}

\subsection{Proposition \ref{proposition: general generating set} - General generating set}
The following proposition transforms combinatorial embeddings into $H$ into combinatorial embeddings into $H_S$ for a general finite generating set $S$. This lemma specifically requires starting with embeddings into $H$, as the generators $s = x$ and $s = y$ have the useful property that $\lambda_k(s) = s^k$.

\begin{proposition*}[General Generating Set]
For any generating set $S \subseteq \mathbb{H}$, there exists a constant $C=C(S)$ such that if $f:G \ra  H$ is a combinatorial embedding, then there is a combinatorial embedding $\tilde f:G\ra H_S$ such that
$$diam(\tilde{f})\leq C\cdot diam(f)\quad\text{and}\quad vol(\tilde{f})\leq C\cdot vol(f).$$
\end{proposition*}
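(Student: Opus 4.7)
The plan is to mirror the structure of the ``Removing $z$'' proposition: scale by the homomorphism $\lambda_k$ for a sufficiently large integer $k = k(S)$, and replace each $\{x,y\}$-step of $f$ by the corresponding $S$-walk obtained by iterating a fixed $S$-expression for that generator. Since $S$ generates $\mathbb{H}$, I fix words $w_x, w_y \in (S \cup S^{-1})^{\ast}$ with $\overline{w_x} = x$, $\overline{w_y} = y$, and set $w_{s^{-1}} := w_s^{-1}$. Let $L := \max_s |w_s|$ and $M := \max_{u \in S \cup S^{-1}} \|u\|$. The key algebraic fact is that $\lambda_k(x) = x^k$ and $\lambda_k(y) = y^k$, so $w_s^k$ evaluates to $\lambda_k(s)$ for each $s \in \{x^{\pm 1}, y^{\pm 1}\}$. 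I then define $\tilde{f}v := \lambda_k(fv)$ for $v \in V$ (injective), and for each edge $e \in E$ with $f_e = (p_1; s_1, \ldots, s_{r-1})$ and $s_i = p_i^{-1}p_{i+1}$, set
\[
\tilde{f}_e := (\lambda_k(p_1);\, w_{s_1}^k,\, w_{s_2}^k,\, \ldots,\, w_{s_{r-1}}^k),
\]
an $S$-walk from $\tilde{f}u$ to $\tilde{f}v$. The key locality observation is that every vertex of $\tilde{f}_e$ has the form $\lambda_k(p_i)\, s_i^j\, u$ for some $0 \leq j \leq k$ and some prefix $u$ of $w_{s_i}$, and therefore sits within $\{x,y\}$-distance $\leq LM$ of the ``skeleton point'' $\lambda_k(p_i) s_i^j$ on the auxiliary $H$-walk $\hat{f}_e := (\lambda_k(p_1); s_1^k, \ldots, s_{r-1}^k)$.

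The main obstacle will be verifying that ``roads do not intersect'': for non-adjacent $e, t$ and $k$ large enough, $\tilde{f}_e \cap \tilde{f}_t = \emptyset$. By the locality observation above it suffices to prove that the $LM$-neighborhoods (in the $\{x,y\}$-metric) of $\hat{f}_e$ and $\hat{f}_t$ are disjoint. In $\mathbb{Z}^3$ coordinates each constituent segment $(\lambda_k(p); s^k)$ is either a straight $x$-segment $\{(kp_1 + i,\, kp_2,\, k^2 p_3) : 0 \leq i \leq k\}$ or a twisted $y$-segment $\{(kp_1,\, kp_2 + j,\, k^2 p_3 + kp_1 j) : 0 \leq j \leq k\}$ (together with $x^{-1}, y^{-1}$ variants), and I would run a finite case analysis on the two direction types and on the relative positions of the four endpoints. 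In each case, an intersection, or even a near-intersection at $\{x,y\}$-distance less than $k$, of two such segments in $H$ forces the underlying edges $\{p, ps\}$ and $\{q, qs'\}$ in $f$ to share a vertex, contradicting $f_e \cap f_t = \emptyset$; choosing $k > 2LM + 2$ then guarantees that the $LM$-neighborhoods are disjoint.

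The ``vertices are isolated from roads'' property follows from a closely related lattice calculation: for $k > LM$, the only vertices of $\tilde{f}_e$ lying in the lattice $\lambda_k(\mathbb{H})$ are the scaled walk vertices $\lambda_k(p_i)$ of $f_e$ (the only way a point $\lambda_k(p_i) s_i^j u$ lies in $k\mathbb{Z} \times k\mathbb{Z} \times k^2\mathbb{Z}$, given the coordinate bounds $|u| \leq L$ and $\|u\| \leq LM$, is that its image reduces to an endpoint $\lambda_k(p_i)$ or $\lambda_k(p_{i+1})$). So if $\tilde{f}v = \lambda_k(fv)$ meets $\tilde{f}_e$, injectivity of $\lambda_k$ forces $fv \in f_e$, contradicting that $f$ is a combinatorial embedding. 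Finally, the diameter and volume bounds are immediate: $\operatorname{im}(\tilde{f})$ lies in the $\{x,y\}$-neighborhood of radius $LM$ around the scaled image of $f$, so by the quasi-isometry between $H$ and $H_S$ its $S$-diameter is bounded by a constant (depending only on $S$) times $\operatorname{diam}(f)$; moreover each road $\tilde{f}_e$ has at most $kL|f_e| + 1$ vertices, yielding $\operatorname{vol}(\tilde{f}) \leq C(S)\operatorname{vol}(f)$.
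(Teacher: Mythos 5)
Your construction is the one the paper uses: scale by $\lambda_k$ with $k=k(S)$, exploit $\lambda_k(x)=x^k$ and $\lambda_k(y)=y^k$, substitute fixed $S$-words $w_x,w_y$ for each $\{x,y\}$-step, and reduce everything to a uniform separation statement for the scaled skeleton walks $\hat f_e$. The gap is that this separation statement --- your claim that two segments $(\lambda_k(p);s^k)$ and $(\lambda_k(q);s'^k)$ coming within $\{x,y\}$-distance less than $k$ force the edges $\{p,ps\}$ and $\{q,qs'\}$ to share a vertex --- is where essentially all the work of the proposition lives, and you leave it as an unexecuted ``finite case analysis'' without identifying the metric input it requires. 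The $\{x,y\}$-distance is a word metric, not a coordinate metric, so the coordinate description of the segments alone cannot close the cases: for instance, for two $y$-segments over the same column with $p_3\neq q_3$, or for an $x$-segment based at $p$ against a $y$-segment based at $pz$ (where the group difference of generic points is $(-i,\,j,\,k^2-ij)$), you must know that a central offset of order $k^2$ costs word length of order $k$. That is exactly the content of property $(4)$ of the expansion function (equivalently, the estimate $|g_3|\le\alpha\beta$ for any $\{x,y\}$-word with $\alpha$ letters $x^{\pm1}$ and $\beta$ letters $y^{\pm1}$), which your sketch never invokes. The paper sidesteps the case analysis entirely: since $H$ has no $3$-cycles, for vertex-disjoint edges one diagonal pairing of the four endpoints has both distances $\ge 2$, hence $\ge 2k$ after applying $\lambda_k$ by that same property, and a short triangle-inequality argument on the two length-$k$ segments yields separation $\ge k$. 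So the claim you assert is true, but as written the proof of the only hard step is missing; your lattice argument for ``vertices are isolated from roads'' has the same (milder) dependence on norm estimates, though there the easy upper bound on the central coordinate suffices.

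A secondary slip: for the volume you sum the per-road counts $kL|f_e|+1$, but a combinatorial embedding carries no load bound --- roads of \emph{adjacent} edges may overlap heavily (e.g.\ a star whose roads share a long common path), so $\sum_e |f_e|$ need not be comparable to $\mathrm{vol}(f)$, and the summation does not by itself give $\mathrm{vol}(\tilde f)\le C(S)\,\mathrm{vol}(f)$. The fix is already in your text: $\operatorname{Im}\tilde f$ lies in the $LM$-neighbourhood of $\lambda_k(\operatorname{Im} f)$, and balls of radius $LM$ have size depending only on $S$, so $\mathrm{vol}(\tilde f)\le C(S)\,\mathrm{vol}(f)$ follows directly; this is how the paper argues both the diameter and the volume bounds.
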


\begin{proof}
Let $M$ be a constant such that $\frac{1}{M} \lVert p \rVert_S \leq \lVert p \rVert \leq M \lVert p \rVert_S$. Define
$$ m := \max(M \lVert x \rVert_S, M \lVert y \rVert_S) + 1. $$
Let $\lambda := \lambda_m$ and write $G=(V,E)$.

Now, define the combinatorial wiring $g: G \to H$ as follows: set $g|_V := \lambda \circ f|_V$, and for each $e \in E$, write $f_e = (p; a_1, \dots, a_r)$ where $a_i \in \{x, y, x^{-1}, y^{-1}\}$, and define 
$$ g_e := (\lambda(p); a_1^m, \dots, a_r^m). $$
This is a walk from $\lambda(p)$ to $\lambda(pa_1...a_r)$ - which confirms that $g$ is indeed a combinatorial  wiring.

\smallskip
Now, define the combinatorial wiring $\tilde{f}: G \to H_S$ to be the function $g$, but with a modification: whenever $g$ uses an edge labeled $x$, replace it with the walk $w_x \in S^\ast$ (the shortest word composed of elements from $S$ that represent $x$). Similarly, whenever $g$ uses an edge labeled $y$, replace it with the walk $w_y\in S^\ast$ (defined the same way as $w_x$).

The idea behind the proof is to show that the embedding $g$ has "large thickness," meaning that there is a significant distance between the roads of non-adjacent edges and a substantial distance between the image of a vertex and the road of any edge that does not contain it.
This extra space allows $\tilde{f}$ to replace each instance of $x$ with $w_x$ and each instance of $y$ with $w_y$ while keeping the roads distinct.

\smallskip

\subsubsection{$g$ has "large thickness"}
We prove here that:
\begin{itemize}
\item Roads are far from each other - for non-adjacent edges $e,e'\in E$ we have $d(g_e,g_{e'})\geq m$.
\item Vertices are far from roads - for $e\in E$ and $v\in V\setminus e$ we have $d(gv,g_e)\geq m$.
\end{itemize}
\underline{Roads are far from each other:}\\
Let $e, e' \in E$ be two non-adjacent edges. Take points $\xi \in g_e$ and $\xi' \in g_{e'}$, and show that $d(\xi, \xi') \geq m$.

By the definition of $g$, we have neighbors $p \sim p'$ and $q \sim q'$ in $H$, such that $\xi \in (\lambda(p); a^m)$ where $a := p^{-1} p'$, and $\xi' \in (\lambda(q); b^m)$ where $b := q^{-1} q'$. Thus, we have the equation:
\begin{equation} 
    \label{ineq_1}
    d(\lambda(p), \xi) + d(\xi, \lambda(p')) = m = d(\lambda(q), \xi') + d(\xi', \lambda(q')).
\end{equation}

Now, notice that $d(p, p') = d(q, q') = 1$. If either $d(p, q')$ or $d(p', q)$ equals 1, then both $d(p, q)$ and $d(p', q')$ must be $\geq 2$, since there are no cycles of length 3 in $H$. Therefore, either $d(p, q), d(p', q') \geq 2$, or $d(p, q'), d(p', q) \geq 2$. Without loss of generality, we assume that $d(p, q),d(p', q') \geq 2$.

\[ \begin{tikzcd}
p \arrow[dash, dotted]{r}{\geq 2}
\arrow[dash]{d}{1} & q \arrow[dash]{d}{1} \arrow[dash]{dl}{1} \\%
p' \arrow[dash, dotted, swap]{r}{\geq 2}& q'
\end{tikzcd}
\]

Thus, $d(\lambda(p), \lambda(q))\geq 2m$ and $d(\lambda(p'), \lambda(q')) \geq 2m$. From equation \eqref{ineq_1}, we see that either $d(\lambda(p), \xi) + d(\lambda(q), \xi') \leq m$, or $d(\lambda(p'), \xi) + d(\lambda(q'), \xi') \leq m$.

\begin{center}
\begin{tikzpicture}

  % Define coordinates
  \coordinate (A) at (0, 1);
  \coordinate (B) at (6, 1);
  \coordinate (C) at (0, -1);
  \coordinate (D) at (6, -1);
  \coordinate (E) at (0, 0);
  \coordinate (F) at (6, 0.3);

  % Draw horizontal lines
  \draw (A) -- (B) node[midway, above] {$\geq 2m$};
  \draw (C) -- (D) node[midway, below] {$\geq 2m$};

  % Draw dashed line
  \draw[dashed] (E) -- (F);

  % Draw vertical lines
  \draw (A) -- (C) node[midway, left] {$\xi$};
  \draw (B) -- (D) node[midway, right] {$\xi'$};

  % Draw vertical markers
  \filldraw (A) circle (2pt);
  \filldraw (C) circle (2pt);
  \filldraw (B) circle (2pt);
  \filldraw (D) circle (2pt);

  % Labels for the lambda points
  \node[anchor=east] at (A) {$\lambda(p)$};
  \node[anchor=east] at (C) {$\lambda(p')$};
  \node[anchor=west] at (B) {$\lambda(q)$};
  \node[anchor=west] at (D) {$\lambda(q')$};

\end{tikzpicture}
\end{center}

In conclusion, we must have $d(\xi, \xi') \geq m$.\\
\underline{Vertices are far from roads:}\\
Let $e \in E$ and $v \in V \setminus e$. Let $\xi \in g_e$, and we will show that $d(gv, \xi) \geq m$. By the definition of $g$, there are neighbors $p \sim p'$ in $f_e$ such that $\xi$ is at most $m$ away from both $\lambda(p)$ and $\lambda(p')$. Since $f$ is a combinatorial embedding the points $p,p'$ and $fv$ are distinct, and since $H$ does not contain cycles of length $3$ one of $d(fv, p)$ or $d(fv, p')$ must be at least 2. Without loss of generality, assume $d(fv, p) \geq 2$. This implies that 
$$d(\lambda(fv), \lambda(p)) \geq 2m.$$ 
Since we also know that $d(\xi, \lambda(p)) \leq m$, it follows that 
$$d(gv, \xi) = d(\lambda(fv), \xi) \geq m,$$ 
which is what we wanted to prove.

\subsubsection{$\tilde f$ is a combinatorial embedding}
$\tilde f|_V$ is injective since $\lambda $ and $f|_V$ are. So we only need to show that:

\textbf{"Roads do not intersect" }: Let $e, e' \in E$ be two non-adjacent edges. Every vertex in $\tilde{f}_e$ is at an $S$-distance of at most 
$$ \max\left(\frac{1}{2}\text{len}(w_x), \frac{1}{2}\text{len}(w_y)\right) = \max\left(\frac{1}{2}\lVert x \rVert_S, \frac{1}{2}\lVert y \rVert_S\right) < \frac{m}{2M} $$ 
from a point in $g_e$. Similarly, every point in $\tilde{f}_{e'}$ is at an $S$-distance of less than $\frac{m}{2M}$ from $g_{e'}$. Since the $\{x,y\}$-distance between $g_e$ and $g_{e'}$ is at least $m$, the $S$-distance between them is at least $\frac{m}{M}$. This ensures that there is no intersection between $\tilde f_e$ and $\tilde f_{e'}$.

\textbf{"Vertices are isolated from roads" }: Let $e \in E$ and $v \in V \setminus e$. As shown earlier, the $\{x,y\}$-distance between $gv$ and $g_e$ is at least $m$, which means the $S$-distance between $gv$ and $g_e$ is at least $\frac{m}{M}$.

Furthermore, we have already established that every point in $\tilde{f}_e$ is at an $S$-distance of less than $\frac{m}{2M}$ from $g_e$. Therefore, $\tilde{f}v = gv \notin \tilde{f}_e$.

\subsubsection{$\tilde f$ has small diameter and volume}
It remains to bound the diameter and volume of $\tilde{f}$:
Observe that $\text{Im}(\tilde{f}) \subseteq N_{\frac{m}{2M}}(\text{Im}(g))$ with respect to the $S$-distance, and $\text{Im}(g) \subseteq N_{\frac{m}{2}}(\lambda_m(\text{Im}(f)))$ with respect to the $\{x, y\}$-distance. Therefore, $\text{Im}(\tilde{f}) \subseteq N_{\frac{m}{2M}+\frac{mM}{2}}(\lambda_m(\text{Im}(f)))$ with respect to the $S$-distance.
Hence, there exists a constant $C = C(S)$ such that 
$$diam(\tilde{f})\leq C\cdot diam(f)\quad\text{and}\quad vol(\tilde{f})\leq C\cdot vol(f).$$
\end{proof}

\appendix

\section{Equivalence of asymptotic notations}\label{appendix: Equivalence of asymptotic notations}
We prove here the equivalence of several asymptotic inequality notations, which is necessary in some parts of the paper.

Let \( f, g: \mathbb{N} \to \N\cup\{\infty \} \). 

\begin{itemize}
\item Denote \( f \preceq g \) if \[
\exists C\forall n:f(n) \leq Cg(n).
\]
\item Recall that $f\lesssim g$ means \[
\exists C\forall n :f(n) \leq Cg(Cn)+C.
\] This is the definition of the symbol $\lesssim $ in this paper and also in \cite{BH}.
\item Denote $f\lessapprox g$ if \[
\exists C\forall n :f(n) \leq Cg(Cn+C)+C.
\] This is the definition of the symbol $\lesssim $ in \cite{HMT}.

\end{itemize}

\begin{lemma}\label{lemma: asymtotical notations smaller than n^r}
Let $r>0$. The following are equivalent:
\begin{enumerate}
\item $f\preceq n^r$.
    \item $f\lesssim n^r$.
    \item $f\lessapprox n^r$.
\end{enumerate}
\end{lemma}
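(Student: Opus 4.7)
The plan is to establish the cycle of implications $(1) \Rightarrow (2) \Rightarrow (3) \Rightarrow (1)$. The first two arrows follow immediately from the definitions, so essentially all the content lies in the last arrow, and even that amounts to a short calculation exploiting that $Cn+C$ is comparable to $Cn$ once $n \geq 1$.

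For $(1) \Rightarrow (2)$, given a constant $C$ with $f(n) \leq C n^r$ for every $n$, I would simply enlarge $C$ if necessary so that $C \geq 1$, and then observe that $C n^r \leq C(Cn)^r + C$ since $(Cn)^r \geq n^r$. For $(2) \Rightarrow (3)$ the inequality $Cn \leq Cn+C$ together with monotonicity of $x \mapsto x^r$ gives the required bound with the same constant $C$.

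For the nontrivial direction $(3) \Rightarrow (1)$, suppose $f(n) \leq C(Cn+C)^r + C$ for all $n \in \mathbb{N}$. For $n \geq 1$ we have
\[
Cn + C = C(n+1) \leq 2Cn,
\]
so
\[
f(n) \leq C(2Cn)^r + C = C(2C)^r n^r + C \leq \bigl(C(2C)^r + C\bigr) n^r,
\]
where the final step uses that $n^r \geq 1$ when $n \geq 1$. If the convention places $0 \in \mathbb{N}$, one handles the single value $f(0)$ by further enlarging the multiplicative constant (noting that $\preceq$ forces $f(0) = 0$ under the usual reading of $0^r$, or one absorbs $f(0)$ into the constant once the statement is read in the natural asymptotic sense).

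I do not expect any real obstacle in this lemma: each step is elementary arithmetic with $x^r$ for $r > 0$, and the argument reduces entirely to the comparison $Cn+C \lesssim Cn$ for $n \geq 1$. The only point requiring a small comment is the boundary behaviour at $n=0$, which is a cosmetic issue rather than a genuine difficulty.
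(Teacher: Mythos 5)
Your proposal is correct and follows essentially the same route as the paper: the forward implications are immediate, and for $(3)\Rightarrow(1)$ the paper likewise takes $C\geq 1$ and bounds $C(Cn+C)^r+C\leq (2C)^{r+1}n^r$ using exactly the comparison $Cn+C\leq 2Cn$ and $n^r\geq 1$ that you spell out. Your remark about $n=0$ is a harmless convention issue (the paper implicitly takes $\mathbb{N}$ starting at $1$) and does not affect the argument.
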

\begin{proof}
$(1)\Rightarrow (2) \Rightarrow (3)$ it trivial. We prove $(3)\Rightarrow (1)$: There is $C\geq 1$ such that for every $n\in \N$: $$f(n)\leq C(Cn+C)^r+C\leq (2C)^{r+1}n^r$$. Hence $f\preceq n^r$.
\end{proof}

\begin{lemma}\label{lemma: asymtotical notations grater than n^r}
Let $r>0$. If $f$ is monotone increasing, then the following are equivalent: 
\begin{enumerate}
\item $n^r\preceq f$.
    \item $n^r\lesssim f$.
    \item $n^r \lessapprox f$.
\end{enumerate}
\end{lemma}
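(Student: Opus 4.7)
The implications $(1) \Rightarrow (2) \Rightarrow (3)$ are immediate from the definitions, without using monotonicity: each successive notation merely permits an additional additive offset inside the argument of $f$. The substance of the lemma lies in $(3) \Rightarrow (1)$, and this is precisely where monotonicity is essential.

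The plan for $(3) \Rightarrow (1)$ is the following. Starting from a constant $C \geq 1$ witnessing $(3)$, namely $n^r \leq C f(Cn+C) + C$ for every $n$, the hypothesis evaluates $f$ at the \emph{larger} point $Cn+C$, whereas $(1)$ demands a bound in terms of $f(n)$ itself. To bridge this gap, the idea is to apply the hypothesis at a suitable \emph{smaller} point $m$ so that $Cm + C \leq n$; monotonicity then upgrades $f(Cm+C) \leq f(n)$. A convenient choice is $m := \lfloor n/(2C) \rfloor$: for $n \geq 4C$ one has simultaneously $Cm + C \leq n$ and $m \geq n/(4C)$. Feeding this into $(3)$ at the point $m$ and rearranging yields an inequality of the form $n^r \leq C_1 f(n) + C_1$ for a constant $C_1 = C_1(C,r)$.

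The remaining step is to absorb the additive constant $+C_1$ into the multiplicative factor. Here one uses that $f$ cannot be bounded: a bounded $f$ together with $(3)$ would force $n^r$ to be bounded, which contradicts $r > 0$. Combining this with monotonicity, there exists $n_1$ with $f(n) \geq 1$ for all $n \geq n_1$, so $n^r \leq 2C_1 f(n)$ on $[n_1,\infty)$, and the finitely many exceptional values $n < n_1$ are absorbed by enlarging the constant. The only real obstacle, modest as it is, is bookkeeping the constants through the substitution $n \mapsto m$ and isolating the initial segment on which $f$ may vanish; the conceptual content is simply that monotonicity converts the ``large argument'' bound $f(Cn+C)$ into the ``pointwise'' bound $f(n)$ by running the hypothesis at $m \asymp n/C$.
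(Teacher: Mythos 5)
Your proof is correct (under the paper's standing convention that $f$ takes values in $\mathbb{N}\cup\{\infty\}$ with $f\geq 1$, which the paper's own small-$n$ step also relies on) and takes essentially the same route as the paper: for $(3)\Rightarrow(1)$ both arguments evaluate the hypothesis at a point $m\asymp n/C$ chosen so that $Cm+C\leq n$ and then use monotonicity to replace $f(Cm+C)$ by $f(n)$. The only difference is bookkeeping: the paper absorbs the additive constant by working above the threshold $n\geq(4C)^{1+1/r}$ (so that $m^r\geq 2C$), while you absorb it via the unboundedness/eventual positivity of $f$; also note that your side remark that $(1)\Rightarrow(2)\Rightarrow(3)$ need no monotonicity is justified by a transfer-of-lower-bound argument rather than by ``a larger argument is allowed,'' but since monotonicity is hypothesized this is immaterial.
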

\begin{proof}
$(1)\Rightarrow (2) \Rightarrow (3)$ follows immediately from monotonicity of $f$. We prove $(3)\Rightarrow (1)$: 
For $n< (4C)^{1+\frac{1}{r}}$ we have $n^r< (4C)^{r+1}\leq (4C)^{r+1}f(n)$. And for \( n \geq (4C)^{1+\frac{1}{r}} \),
define $n':=\lfloor \frac{n-C}{C}\rfloor$, and notice that$$n'\geq \frac{n}{C}-2\geq \frac{n}{2C}\geq (2C)^\frac{1}{r},$$ and \[
f(n) \geq f(Cn'+C) \geq \frac{n'^r - C}{C} = \frac{1}{C}n'^r - 1 \geq \frac{1}{2C}n'^r \geq \frac{1}{(2C)^{r+1}}n^r.
\]
Thus, \( n^r \leq (2C)^{r+1} f(n)\leq (4C)^{r+1} f(n) \). Therefore, \( n^r \preceq  f(n) \).
\end{proof}

Now we justify one implication from the paper:
\begin{lemma}\label{lemma: 1.17 imply fact}
\cite[theorem 1.17]{BH} implies the statement:

Let \( d\geq 3 \), and let \( Y \) be a graph with separation profile \( \simeq n^\alpha \) for $\alpha\in (0,1]$. There exists a constant \( c > 0 \) such that for every \( n \in \mathbb{N} \), there exists a graph with \( n \) vertices and maximum degree \( \leq d \), for which any $1$-thick embedding into \( Y \) has volume \( \geq cn^{1/\alpha} \).
\end{lemma}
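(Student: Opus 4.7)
The plan is to combine Theorem~1.17 of \cite{BH}, which bounds the cut of an embedded graph in terms of the separation profile of the host space, with a bounded-degree expander family, to produce for each $n$ a hard graph whose $1$-thick embedding into $Y$ forces volume at least $cn^{1/\alpha}$. The crucial point is that an expander on $n$ vertices has cut $\simeq n$, which, when paired with the upper bound $\text{sep}_Y(V) \lesssim V^\alpha$, immediately forces $n \lesssim V^\alpha$.

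First I would invoke Theorem~1.17 of \cite{BH} in the following form: there exists a constant $C_1 = C_1(Y,d)$ such that whenever a graph $G$ of maximum degree $\leq d$ admits a $1$-thick embedding $f:G\to Y$ of volume $V$, one has
\[
\text{cut}(G) \leq C_1 \text{sep}_Y(C_1 V) + C_1.
\]
Simultaneously, the hypothesis $\text{sep}_Y \simeq n^\alpha$, together with the monotonicity of the separation profile and Lemma~\ref{lemma: asymtotical notations smaller than n^r}, supplies the clean pointwise bound $\text{sep}_Y(m) \leq C_2 m^\alpha$ for some constant $C_2$ and every $m \in \mathbb{N}$.

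For the hard graph I would take a bounded-degree expander. Since $d\geq 3$, there exist families of $d$-regular expanders on arbitrarily many vertices, and by mild padding with isolated vertices one obtains, for each sufficiently large $n$, a graph $G_n$ on exactly $n$ vertices of maximum degree $\leq d$ with $\text{cut}(G_n) \geq c_0 n$ for an absolute $c_0 > 0$. Substituting into the two displays above yields
\[
c_0 n \leq \text{cut}(G_n) \leq C_1 C_2 (C_1 V)^\alpha + C_1,
\]
which rearranges to $V \geq c n^{1/\alpha}$ for all $n \geq n_0$ and some $c > 0$ depending only on $Y$ and $d$. For the finitely many small $n < n_0$ the Fact is trivial after shrinking $c$, since any $1$-thick embedding of a nonempty graph has volume bounded below by a positive constant.

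The main obstacle is matching the asymptotic conventions. Theorem~1.17 of \cite{BH} is stated using the $\lesssim$ convention of that paper, whereas the Fact requires a pointwise polynomial lower bound on the volume; moreover, the separation profile of $Y$ is controlled only up to the equivalence $\simeq$. Lemmas~\ref{lemma: asymtotical notations smaller than n^r} and~\ref{lemma: asymtotical notations grater than n^r} are tailored for exactly this translation, ensuring that a polynomial asymptotic inequality in any of the three conventions against $n^r$ collapses to a pointwise polynomial bound (up to constants and a threshold), using that $\text{sep}_Y$ is monotone. Once this bookkeeping is done and an explicit bounded-degree expander of each size has been fixed, the argument reduces to the one-line combination displayed above.
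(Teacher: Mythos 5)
Your proposal is correct and takes essentially the same route as the paper: a bounded-degree expander family witnesses $\mathrm{sep}\gtrsim n$, Theorem 1.17 of \cite{BH} transfers this through the ($1$-thick, hence coarse $k$-wiring) embedding to $\mathrm{sep}_Y$ evaluated at the volume, and the appendix lemmas on asymptotic notation (using monotonicity) convert the asymptotic inequality into the required pointwise bound $V\geq cn^{1/\alpha}$. The only difference is presentational: you apply the theorem per graph via $\mathrm{cut}(G)$, while the paper takes $X$ to be the union of all graphs of maximum degree $\leq d$ and argues at the level of the profile $wir^k_{X\to Y}$.
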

\begin{proof}
In the context of \cite[theorem 1.17]{BH}, take \( X \) to be the graph formed by the union of all finite graphs with maximum degree \( \leq d \), since there is family of  $3$-regular expander graphs we infer that $sep_X\gtrsim n$. So, by \cite[theorem 1.17]{BH} we get that $wir^k_{X\ra Y}\gtrsim n^{1/\alpha}$ for any $k\in \N$. Where $$wir^k_{X\ra Y}(n):=\max_{\G\leq X,|\G|\leq n}\ \inf_{f:
\G\ra Y \text{ is a coarse }k-wiring} vol(f).$$ Since $wir^k_{X\ra Y}$ is monotone increasing we use lemma \ref{lemma: asymtotical notations grater than n^r} and get $wir^k_{X\ra Y}\succeq n^{1/\alpha}$. Which means $\exists c\forall n: wir^k_{X\ra Y}\geq cn^{\frac{1}{\alpha}}$. Since every $1$-thick embedding is in particular a coarse $k$-wiring we finish.
\end{proof}

\section{Convert embedding into $\R^d$ to embedding into $\Z^d$}
Let $d\geq 3$. Let \( S \subseteq \mathbb{Z}^d \) be a finite generating set. Suppose we have a $1$-thick embedding \( f: G \to \mathbb{R}^d \) for a finite graph \( G \). Our goal is to construct a $1$-thick embedding \( \tilde{f}: G \to \mathrm{Cay}(\mathbb{Z}^d, S) \) with same volume as \( f \), up to a constant (independent of \( G \)).

Define \( Z_d := \mathrm{Cay}(\mathbb{Z}^d, \{e_1, \ldots, e_d\}) \). It is sufficient to show the existence of a $1$-thick embedding \( \tilde{f}: G \to Z_d \) with the same volume (up to a constant) as \( f \). Once we have this embedding, we can scale \( \tilde{f} \) by a large enough factor (depending only on \( S \)) and then replace each edge labeled by \( e_i \) with the shortest \( S \)-word that represents it. This will still result in a $1$-thick embedding.

Thus, we focus on constructing a $1$-thick embedding into \( Z_d \), which can also be viewed as a subset of \( \mathbb{R}^d \).

The idea is to rescale \( f \) by factor of \( 3\sqrt{d} \), which gives a $3\sqrt{d}$-thick embedding, and then show that every curve \( \gamma: [0,1] \to \mathbb{R}^d \) can be deformed into a curve \( \hat{\gamma}: [0,1] \to Z_d \subset \mathbb{R}^d \), with endpoints very close to the original ones such that \( \mathrm{Im}(\hat{\gamma}) \subseteq N_{\sqrt{d}}(\mathrm{Im}(\gamma)) \). This completes the argument since this deformation converts a $3\sqrt d$-thick embedding in \( \mathbb{R}^d \) into a $1$-thick embedding in \( Z_d \), as we will see.

Define \( \lfloor \cdot \rfloor: \mathbb{R}^d \to \mathbb{Z}^d \) as the pointwise floor function. We will prove the following:

\begin{lemma}
Let \( \gamma: [0,1] \to \mathbb{R}^d \) be a curve. There exists a curve \( \hat{\gamma}: [0,1] \to Z_d \subset \mathbb{R}^d \) such that \( \hat{\gamma}(0) = \lfloor \gamma(0) \rfloor \), \( \hat{\gamma}(1) = \lfloor \gamma(1) \rfloor \), and \( \mathrm{Im}(\hat{\gamma}) \subseteq N_{\sqrt{d}}(\mathrm{Im}(\gamma)) \) with respect to the \( \|\cdot\|_2 \) metric.
\end{lemma}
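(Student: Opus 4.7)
The strategy is to discretize $\gamma$ at a fine partition of $[0,1]$, take lattice vertices at the floors of $\gamma$, and interpolate by short axis-aligned walks in $Z_d$. By uniform continuity of $\gamma$ on the compact interval $[0,1]$, I would fix some $\epsilon\in(0,1)$ and a partition $0=t_0<\cdots<t_n=1$ with $\|\gamma(s)-\gamma(t)\|_\infty<\epsilon$ whenever $s,t$ share a subinterval. Setting $p_i:=\lfloor\gamma(t_i)\rfloor$, the condition $\|\gamma(t_{i+1})-\gamma(t_i)\|_\infty<1$ forces $\|p_{i+1}-p_i\|_\infty\le 1$, so the changed-coordinate set splits as $S_i^{\pm}=\{k:p_{i+1}^k=p_i^k\pm 1\}$. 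I would then build $\hat\gamma$ by walking from $p_i$ to $p_{i+1}$ in two phases: first increment the coordinates in $S_i^+$ one at a time, then decrement those in $S_i^-$ one at a time; concatenating over $i$ yields a walk in $Z_d$ with $\hat\gamma(0)=\lfloor\gamma(0)\rfloor$ and $\hat\gamma(1)=\lfloor\gamma(1)\rfloor$.

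The crux is to show $\mathrm{Im}(\hat\gamma)\subseteq N_{\sqrt d}(\mathrm{Im}\,\gamma)$. The central observation is a \emph{boundary lemma}: if $k\in S_i^+$ then $\gamma(t_{i+1})^k$ lies in the cube above $\gamma(t_i)^k$, and together with $|\gamma(t_{i+1})^k-\gamma(t_i)^k|<\epsilon$ this forces $\gamma(t_i)^k\in(p_i^k+1-\epsilon,\,p_i^k+1)$; symmetrically $\gamma(t_i)^k\in[p_i^k,\,p_i^k+\epsilon)$ when $k\in S_i^-$. For any intermediate lattice vertex $q=p_i+\sum_{k\in T}e_k$ in the $S_i^+$-phase (with $T\subseteq S_i^+$), a termwise squared-distance computation, splitting the coordinates into the four classes $T$, $S_i^+\setminus T$, $S_i^-$, and the complement of $S_i$, yields
\[
\|q-\gamma(t_i)\|_2^2\;<\;d-(1-\epsilon^2)\bigl(|T|+|S_i^-|\bigr)\;\le\;d,
\]
placing $q$ in $B(\gamma(t_i),\sqrt d)$. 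A symmetric computation places every lattice vertex in the $S_i^-$-phase in $B(\gamma(t_{i+1}),\sqrt d)$. Consecutive vertices of $\hat\gamma$ therefore lie in a common Euclidean ball of radius $\sqrt d$, and by the convexity of Euclidean balls the unit segment joining them lies inside that ball, hence inside $N_{\sqrt d}(\mathrm{Im}\,\gamma)$.

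The main obstacle I anticipate is precisely the passage from a vertex-level bound to a segment-level bound: a naive triangle inequality would lose up to $1/2$ on segment midpoints and fail to give $\sqrt d$. The convexity-of-balls argument repairs this, but it is available only because the walk is structured so that each consecutive pair of vertices is witnessed by the \emph{same} point on $\gamma$. This is why the construction performs all $S_i^+$-moves before any $S_i^-$-move, letting the two phases meet at the single vertex $p_i+\sum_{k\in S_i^+}e_k$, which by the two calculations above is simultaneously within $\sqrt d$ of $\gamma(t_i)$ and of $\gamma(t_{i+1})$.
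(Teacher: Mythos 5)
Your proof is correct, but it takes a genuinely different route from the paper. The paper never discretizes the parameter interval: it considers the set $Q$ of lattice points $q$ whose unit cubes $C_q=q+[0,1]^d$ meet $\mathrm{Im}(\gamma)$, forms the graph on $Q$ in which two points are adjacent when their cubes share a point, proves by a supremum/compactness argument (using connectedness of $[0,1]$ and closedness of the cubes) that $\lfloor\gamma(0)\rfloor$ and $\lfloor\gamma(1)\rfloor$ lie at finite distance in this graph, and then routes $\hat\gamma$ through the $1$-skeletons of the resulting chain of cubes; the bound $\sqrt d$ comes simply from the diameter of a unit cube touching the curve. You instead use uniform continuity to choose a fine partition, connect $\lfloor\gamma(t_i)\rfloor$ to $\lfloor\gamma(t_{i+1})\rfloor$ by an explicit two-phase axis-aligned staircase, and control each lattice vertex by the boundary lemma together with the termwise estimate $\|q-\gamma(t_i)\|_2^2< d-(1-\epsilon^2)\bigl(|T|+|S_i^-|\bigr)$, finishing with convexity of Euclidean balls for the unit segments; your ordering of the $+$ moves before the $-$ moves, so that each segment has both endpoints witnessed by a single point $\gamma(t_i)$ or $\gamma(t_{i+1})$, is exactly the right device and the estimates check out (including the symmetric bound $\gamma(t_{i+1})^k\in(p_{i+1}^k+1-\epsilon,\,p_{i+1}^k+1)$ for $k\in S_i^-$ needed in the second phase). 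The trade-off: your argument is more constructive and quantitative, producing a controlled monotone walk and in fact strict containment in open balls of radius $\sqrt d$, at the price of the coordinate bookkeeping; the paper's argument is softer and shorter on estimates, but its path through the cube skeletons is less explicit and it needs the separate graph-connectivity argument that your construction sidesteps entirely.
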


After proving this lemma, we will complete the construction by defining \( \tilde{f}: G \to Z_d \) as follows: Let \( g := 3\sqrt{d} f \). For each vertex \( v \in VG \), define \( \tilde{f}(v) := \lfloor g(v)\rfloor \). For each edge \( e \subseteq G \), since \( g|_e \) is a curve in \( \mathbb{R}^d \), extend $\tilde f$ to $e$ by \( \tilde{f}|_e := \hat{g|_e} \). This definition is well-defined because, for any vertex \( v \in e \in EG \), both \( \hat{g|_e} \) and \( \tilde{f} \) agree at \( v \), as they both return \( \lfloor g(v) \rfloor \). 

The function $\tilde f$ is a $1$-thick embedding since: Recall that the metric on \( Z_d \) is induced by its graph structure, which is equivalent to the \( \|\cdot\|_1 \) metric as a subset of \( \mathbb{R}^d \). Now, let \( e, e' \in EG \) be non-adjacent edges. We have
\[
d_2(\tilde{f}(e), \tilde{f}(e')) \geq d_2(g(e), g(e')) - 2\sqrt{d} \geq \sqrt{d} \geq 1,
\]
where \( d_i \) is the distance induced by the \( \|\cdot\|_i \) metric for \( i = 1, 2 \). Since \( \|\cdot\|_1 \geq \|\cdot\|_2 \), we have \( d_1(\tilde{f}(e), \tilde{f}(e')) \geq 1 \), as required. A similar argument applies to the case of an edge and a vertex not incident to it.

\begin{proof}[proof of the lemma]
For \( q \in \mathbb{Z}^d \), define the cube \( C_q := q + [0,1]^d \). Let \( Q := \{ q \in \mathbb{Z}^d \mid C_q \cap \mathrm{Im}(\gamma) \neq \emptyset \} \) be the set of points in $\Z^d$ that "touch" \( \gamma \). Define a graph \( \Gamma \) with vertex set \( Q \), where there is an edge between \( q \) and \( q' \) if and only if \( C_q \cap C_{q'} \neq \emptyset \), or equivalently, the cubes \( C_q \) and \( C_{q'} \) share a vertex. Let \( x := \lfloor \gamma(0) \rfloor \) and \( y := \lfloor \gamma(1) \rfloor \), noting that \( x, y \in Q \). We will show that \( d_\Gamma(x, y) < \infty \), which conclude the proof: In this case, there exists a finite sequence of cubes \( C_x = C_{q_1}, C_{q_2}, \dots, C_{q_r} = C_y \), where $q_i\in Q$ and each consecutive pair \( C_{i+1},C_i \) shares a vertex. This gives a path from \( x \), a vertex of \( C_x \), to \( y \), a vertex of \( C_y \), passing through the union of the $1$-skeletons of the cubes \( C_{q_i} \). This path lies in \( Z_d \), and it is the desired curve \( \hat{\gamma} \), as each \( C_i \) is within \( N_{\sqrt{d}}(\mathrm{Im}(\gamma)) \) with respect to the \( \|\cdot\|_2 \) metric.

To prove that \( d_\Gamma(x, y) < \infty \), we will prove that \( d_\Gamma(x, y)\leq |Q| \). Define 
\[
A := \{ t \in [0,1] \mid \exists q \in Q : \gamma(t) \in C_q, d_\Gamma(x, q) \leq |Q| - 1 \}.
\]
Let \( b := \sup A \). Since \( 0 \in A \), the set \( A \) is non-empty. Therefore, there exists a sequence \( a_k \to b \) with \( a_k \in A \). For each \( a_k \), there is a \( q(a_k) \in Q \) such that \( \gamma(a_k) \in C_{q(a_k)} \) and \( d(x, q(a_k)) \leq |Q| - 1 \). Since \( Q \) is finite, there exists a subsequence \( a_{k_j} \to b \) such that \( q(a_{k_j}) \) is constant, say \( q \). Therefore, \( \gamma(a_{k_j}) \in C_q \) for all \( j \), and since \( C_q \) is closed, we have \( \gamma(b) \in C_q \).

If \( b = 1 \), then \( \gamma(b) \in C_y \). In this case, the cubes \( C_q \) and \( C_y \) share a point (the point \( \gamma(b) \)), so \( d_\Gamma(q, y) \leq 1 \), which implies \( d_\Gamma(x, y) \leq |Q| \), as required.

If \( b < 1 \), let \( t_k \to b \) be a sequence with \( t_k \in (b, 1] \), meaning \( t_k \notin A \). Thus, there exists \( q(t_k) \in Q \) such that \( \gamma(t_k) \in C_{q(t_k)} \) and \( d_\Gamma(x, q(t_k)) \geq |Q| \). As before, we can narrow down to a subsequence, or just assume that \( q(t_k) \) is constant, say \( q' \). Therefore, \( \gamma(t_k) \in C_{q'} \), which implies \( \gamma(b) \in C_{q'} \).

Since \( C_q \cap C_{q'} \supseteq \{\gamma(b)\} \), we have \( d_\Gamma(q, q') \leq 1 \). Given that \( d_\Gamma(x, q) < |Q| \) and \( d_\Gamma(x, q') \geq |Q| \), it follows that \( d_\Gamma(x, q') = |Q| \), which is a contradiction! The distance in a graph cannot be equal to the number of vertices; it must either be infinite or less than the number of vertices by at least 1.
\end{proof}
\subsection*{Acknowledgments}
We are thankful to Itai Benjamini and Elad Tzalik for guidance and helpful discussions.

\end{document}